\newcommand\shorttitle{On the moments of the partition function of the C$\beta$E field}
\newcommand\authors{Theodoros Assiotis}
\ifodd\value{page}
\authors
\shorttitle
\newtheorem{thm}{Theorem}[section]
\newtheorem{cor}[thm]{Corollary}
\newtheorem{lem}[thm]{Lemma}
\newtheorem{defn}[thm]{Definition}
\newtheorem{rmk}[thm]{Remark}
\newtheorem{prop}[thm]{Proposition}
\title{\large \bf ON THE MOMENTS OF THE PARTITION FUNCTION OF THE C$\beta$E FIELD}
\author{\small THEODOROS ASSIOTIS}
\date{}
\begin{document}

\maketitle

\begin{abstract}
We obtain a combinatorial formula for the positive integer moments of the partition function of the $C\beta E_{N}$ field, or equivalently the moments of the moments of the characteristic polynomial of the $C\beta E_{N}$ ensemble. We then use this formula to establish the large $N$ asymptotics of these moments in the ``moment-supercritical" regime. A key role is played by Jack polynomials.
\end{abstract}

\section{Introduction}
\subsection{The main result}
Let $\mathbb{T}$ be the unit circle and consider for $\beta>0$ the following probability measure on $\mathbb{T}^N$, with $\theta_j\in [0,2\pi)$:  
\begin{align}\label{EnsembleDef}
\frac{\Gamma\left(1+\frac{\beta}{2}\right)^N}{(2\pi)^N\Gamma \left(1+N\frac{\beta}{2}\right)}\prod_{1\le j<k \le N}\big|e^{\i \theta_j}-e^{\i \theta_k}\big|^{\beta}d\theta_1d\theta_2\cdots d\theta_N.
\end{align}
This probability measure is called the $C\beta E_{N}$ ensemble and we denote the expectation with respect to it by $\mathbb{E}_N^{(\beta)}$. For $\beta=2$ this measure is the law of the eigenvalues of a random Haar distributed $N\times N$ unitary matrix and it is called the CUE (unitary) ensemble. The $\beta=1$ and $\beta=4$ cases are also distinguished and are called the COE (orthogonal) and CSE (symplectic) ensembles respectively. In the orthogonal case one can obtain such a random matrix by taking $\mathbf{U}^T\mathbf{U}$ where $\mathbf{U}$ is an $N\times N$ Haar distributed unitary matrix, while the symplectic case is slightly more complicated, see \cite{KillipNenciu}, \cite{Matsumoto}.  Matrix models having the $C\beta E_N$ ensemble as the induced law of eigenvalues also exist for general $\beta>0$, see for example \cite{KillipNenciu}, but we will not need to use this fact here.

We now define the characteristic polynomial of the $C\beta E_N$ ensemble, with $\mathbf{z}=(z_1,\dots,z_N) $ where the $z_j=e^{\i \theta_j}$, $j=1,\dots,N$ are distributed according to (\ref{EnsembleDef}):
\begin{align}
\mathsf{\Psi}^{(N)}_{\mathbf{z}}(t)=\prod_{j=1}^N\left(1-e^{-\i t}z_j\right)=\prod_{j=1}^N\left(1-e^{-\i (t-\theta_j)}\right), \ t \in [0,2\pi].
\end{align}
We call $\log \mathsf{\Psi}_{\mathbf{z}}^{(N)}(\cdot)$ the $C\beta E_{N}$ field. Borrowing the statistical mechanics terminology from \cite{FyodorovKeating}, \cite{GnutzmannFyodorovKeating} we also define the partition function:
\begin{align}
\mathcal{Z}_\mathbf{z}^{(N)}(q)=\frac{1}{2\pi}\int_0^{2\pi}e^{2q\Re\log \mathsf{\Psi}_{\mathbf{z}}^{(N)}(t)}dt=\frac{1}{2\pi}\int_0^{2\pi}\big|\mathsf{\Psi}^{(N)}_{\mathbf{z}}(t)\big|^{2q}dt.
\end{align}
Finally, we define the moments of the partition function of the $C\beta E_{N}$ field, or using the terminology of \cite{BaileyKeating}, \cite{AssiotisKeating}, \cite{AssiotisBaileyKeating}, \cite{BaileyKeatingZeta} (from where we also borrow the notation) the moments of the moments of the characteristic polynomial of the $C\beta E_{N}$ ensemble:
\begin{align}
 \textnormal{MoM}_N^{(\beta)}(k;q)=\mathbb{E}_N^{(\beta)}\left[\left(\mathcal{Z}_\mathbf{z}^{(N)}(q)\right)^k\right]=\mathbb{E}_N^{(\beta)}\left[\left(\frac{1}{2\pi}\int_0^{2\pi}\big|\mathsf{\Psi}^{(N)}_{\mathbf{z}}(t)\big|^{2q}dt\right)^k\right].  
\end{align}

In this paper we give a combinatorial formula for $\textnormal{MoM}_N^{(\beta)}(k;q)$ for $k,q\in \mathbb{N}$ and general $\beta>0$ in Proposition \ref{CombinatorialRepMoM} below. We then use this formula to establish the large $N$ asymptotics of these moments in the so-called ``moment-supercritical" regime. This terminology comes from a connection to log-correlated Gaussian fields and multiplicative chaos that we briefly recall below, see \cite{KeatingWong} for more details.

\begin{thm}\label{MainResult} Let $k,q\in \mathbb{N}$ and $\beta>0$. If moreover $\beta$ satisfies:
\begin{itemize}
    \item $\beta<4q^2$, for $k=2$,
    \item $\beta \le 2$, for $k\ge 3$,
\end{itemize}
then we have the following asymptotics:
\begin{align}\label{AsymptoticsDisplay}
\lim_{N\to \infty}\frac{1}{N^{\frac{2}{\beta}(kq)^2-(k-1)}}\textnormal{MoM}_N^{(\beta)}(k;q)=\mathfrak{c}^{(\beta)}(k;q),
\end{align}
where the coefficient $\mathfrak{c}^{(\beta)}(k;q)$ is finite and strictly positive and is given as an integral of an explicit non-negative weight over continuous interlacing arrays with constraints, see (\ref{ConstIntegralRep}) for the precise definition.
\end{thm}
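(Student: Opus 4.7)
The plan is to extract the asymptotics \eqref{AsymptoticsDisplay} directly from the exact combinatorial formula for $\textnormal{MoM}_N^{(\beta)}(k;q)$ provided by Proposition \ref{CombinatorialRepMoM}. That formula expresses the moment of moments as a finite sum over tuples of integer partitions subject to interlacing-type constraints, with summand an explicit product of Jack-polynomial-related factors (Jack norms, Pochhammer/Gamma ratios, and combinatorial weights). The overall strategy is to rescale the partitions by $N$, identify the limit as a Riemann sum converging to the continuous interlacing-array integral \eqref{ConstIntegralRep}, and justify the passage to the limit by dominated convergence.

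First, we regroup the summand from Proposition \ref{CombinatorialRepMoM} in order to isolate its $N$-dependence. Using the standard asymptotics $\Gamma(Nx+a)/\Gamma(Nx+b)\sim (Nx)^{a-b}$ applied to each Pochhammer/Gamma factor, each summand should factor as an $N$-power times a function $F_N$ of the rescaled partition parts $\lambda/N$, with $F_N$ converging pointwise to a non-negative density $W$ on the continuous interlacing array. Combining this $N$-power with the volume factor $N^{-1}$ per summation index (arising from converting the discrete sum into a Riemann integral) should produce exactly the announced prefactor $N^{\frac{2}{\beta}(kq)^2-(k-1)}$. The $\frac{2}{\beta}(kq)^2$ piece is of Selberg/Morris type and reflects the growth of $\mathbb{E}_N^{(\beta)}\big|\mathsf{\Psi}_{\mathbf{z}}^{(N)}(0)\big|^{2kq}$, while the correction $-(k-1)$ encodes the concentration of the $k$ integration variables in $\big(\mathcal{Z}_{\mathbf{z}}^{(N)}(q)\big)^k$ onto a single common point of $\mathbb{T}$, which is the hallmark of the moment-supercritical regime.

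The main obstacle is upgrading the pointwise convergence $F_N\to W$ to convergence of the full renormalised sum, for which we need an $N$-uniform integrable majorant of $F_N$. The limiting density $W$ carries powers of the spacings between consecutive rows of the continuous interlacing array, and its integrability is precisely what is controlled by the hypotheses on $\beta$ in the statement: for $k=2$ a single row-interaction arises and its integrability threshold is $\beta<4q^2$; for $k\ge 3$ the simultaneous integrability of multiple row-interactions across the whole array forces the stronger condition $\beta\le 2$. Once integrability of $W$ on the interlacing array is established, the same Pochhammer-ratio estimates (together with monotonicity in the partition parts) should dominate $F_N$ up to a multiplicative constant, and Lebesgue's dominated convergence theorem then yields the convergence of the renormalised sums to $\int W$, identifying $\mathfrak{c}^{(\beta)}(k;q)$ with this integral. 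Non-negativity of $W$ gives $\mathfrak{c}^{(\beta)}(k;q)\ge 0$; strict positivity of $W$ on an open subset of the array combined with Fatou's lemma on that subset upgrades this to $\mathfrak{c}^{(\beta)}(k;q)>0$, completing the proof.
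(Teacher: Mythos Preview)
Your overall strategy matches the paper's: start from Proposition~\ref{CombinatorialRepMoM}, apply the Gamma-ratio asymptotics to each factor of $\psi^{(2/\beta)}_{\lambda/\mu}$ so that the rescaled weight converges to the continuous weight $\phi^{(2/\beta)}_{M,M+1}$, and then pass from the Riemann sum over $\mathsf{I}_N(k;q)$ to the integral over $\mathsf{I}_c(k;q)$. The paper organises this exactly as you do, first proving that the asymptotics \eqref{AsymptoticsDisplay} hold whenever the limiting integral is finite (Proposition~\ref{PropAsymptotics1}), and then verifying finiteness under the stated hypotheses on $\beta$ (Proposition~\ref{PropAsymptotics2}). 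Strict positivity is argued from the integrand being continuous and strictly positive on the interior of $\mathsf{I}_c(k;q)$, which is equivalent to your Fatou argument.

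There is one point where your reasoning would lead you astray. For $k\ge 3$ the condition $\beta\le 2$ is \emph{not} an integrability threshold forced by ``multiple row-interactions''; it is merely a convenient sufficient condition under which each factor $\phi^{(2/\beta)}_{M,M+1}$ is uniformly bounded (the paper shows $\phi^{(\delta)}_{M,M+1}\le \Gamma(\delta)^{-M}$ for $\delta\ge 1$ by an elementary product inequality), so the integral is dominated by the finite volume of the compact set $\mathsf{I}_c(k;q)$. A genuine singularity analysis for $k\ge 3$ would point instead toward the conjectural threshold $\beta<2kq^2$, which the paper does not attempt to establish; if you try to carry out the analysis you describe you will not recover $\beta\le 2$. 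Similarly, for $k=2$ the paper does not analyse the full array integral directly: it first integrates out all intermediate rows using the explicit normalisation of the orbital beta process, reducing $\mathfrak{c}^{(\beta)}(2;q)$ to a single Selberg-type integral over the constrained centre row, and only then performs the singularity count yielding $\beta<4q^2$. Finally, note that the correction $-(k-1)$ in the exponent arises in the proof purely combinatorially, from the $(k-1)$ sum constraints reducing the number of free coordinates of $\mathsf{I}_N(k;q)$ from $(kq)^2$ to $(kq)^2-(k-1)$; no concentration argument for the $t_j$ variables is used.
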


As far as we are aware, the result, even the precise order of the asymptotic in $N$, is new for parameters satisfying both $\beta\neq 2$ and $k\neq 1$ and we elaborate on the history and approaches to this problem below. The restriction to $\beta\le 2$ for $k\ge 3$ is a technical one and we expect the statement of the theorem to hold for all $\beta<2kq^2$ when $k>1$. Observe that, for $\beta<2kq^2$ the exponent of $N$ in (\ref{AsymptoticsDisplay}) is strictly greater than one. When $\beta=2kq^2$ this exponent becomes one, however it is expected when $k>1$, from the connection to multiplicative chaos \cite{KeatingWong} recalled shortly, that there should be a phase transition and that, up to a multiplicative constant, $\textnormal{MoM}_N^{(2kq^2)}(k;q)$ should grow to leading order like $N\log N$.

The weight mentioned in Theorem \ref{MainResult} is identically 1 when $\beta=2$ and $\mathfrak{c}^{(2)}(k;q)$ recovers the volume of the set $\mathsf{I}_c(k;q)$ from Definition \ref{DefContSet} below. For general $\beta>0$ it is very closely related to the orbital beta process, a certain probability distribution on continuous interlacing arrays, see \cite{GorinMarcus}, \cite{CuencaOrbital}, \cite{AssiotisNajnudel}. We note that while the integral expression (\ref{ConstIntegralRep}) for $\mathfrak{c}^{(\beta)}(k;q)$ is unambiguously defined for all $\beta>0$, it is infinite when $k\ge 2$ and $\beta$ is large enough (for reasons that we explain in Section \ref{PropAsympt2Sec}). This motivates the definition, for any fixed $k,q\in \mathbb{N}$, of the following subset of $(0,\infty)$:
\begin{align}
 \mathcal{A}(k;q)=\big\{\beta>0: \mathfrak{c}^{(\beta)}(k;q)<\infty \big\}.
\end{align}

Then, Theorem \ref{MainResult} is a consequence of the following two results, which use as a starting point the combinatorial formula for $\textnormal{MoM}_N^{(\beta)}(k;q)$ from Proposition \ref{CombinatorialRepMoM}. Proposition \ref{PropAsymptotics1} is proven in Section \ref{PropAsympt1Prop} while Proposition \ref{PropAsymptotics2} is proven in Section \ref{PropAsympt2Sec} along with a number of results on the leading order coefficient $\mathfrak{c}^{(\beta)}(k;q)$, including more explicit expressions for $k=1$ and $k=2$. In the special case $\beta=2$ corresponding to the CUE, $\mathfrak{c}^{(2)}(2;q)$ is known to have connections to integrable systems, in particular the Painlev\'{e} V equation and we discuss this briefly in Section \ref{IntSys}.

\begin{prop}\label{PropAsymptotics1}
Let $k,q\in \mathbb{N}$ and $\beta \in \mathcal{A}(k;q)$. Then, the asymptotics (\ref{AsymptoticsDisplay}) hold.
\end{prop}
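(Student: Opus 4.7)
The strategy is to use the combinatorial formula for $\textnormal{MoM}_N^{(\beta)}(k;q)$ furnished by Proposition \ref{CombinatorialRepMoM} as a Riemann sum for the integral (\ref{ConstIntegralRep}) defining $\mathfrak{c}^{(\beta)}(k;q)$. Given the form of the limit as an integral over continuous interlacing arrays in $\mathsf{I}_c(k;q)$ with a weight related to the orbital $\beta$-process, one expects $\textnormal{MoM}_N^{(\beta)}(k;q)$ to be presented as a finite sum over tuples of signatures arranged into discrete interlacing arrays of the same combinatorial shape, with summands built out of specializations of Jack polynomials playing the role of discrete analogues of the orbital $\beta$-process density. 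The plan is then to rescale the discrete signatures by $N^{-1}$ so that they become points in $\mathsf{I}_c(k;q)$, identify the appropriately normalized summand as a Riemann approximation to the integrand of (\ref{ConstIntegralRep}), and pass to the limit.

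The overall normalization in (\ref{AsymptoticsDisplay}) can be read off as follows. Using Stanley's dimension formula for $J_\lambda(1^N)$ and the homogeneity of Jack polynomials, the combined Jack factors in a summand with rescaled variables of order one scale like $N^{2(kq)^2/\beta}$. Converting the discrete sum into a Riemann integral over $\mathsf{I}_c(k;q)$ introduces the mesh-volume factor $N^{-d}$, where $d$ is the dimension of the continuous configuration space; the discrete sum however effectively lives in dimension $d-(k-1)$, because the combinatorial formula should impose $k-1$ integer constraints, one per power of the partition function $\mathcal{Z}_{\mathbf{z}}^{(N)}(q)$, encoding the integrality of the Fourier expansion of $|\mathsf{\Psi}^{(N)}_{\mathbf{z}}|^{2q}$. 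Combining these ingredients yields the displayed normalization $N^{-2(kq)^2/\beta + (k-1)}$, and pointwise convergence of the rescaled summand to the integrand of (\ref{ConstIntegralRep}) then follows from standard asymptotics of Jack polynomials under coordinate rescaling.

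The main obstacle is upgrading this pointwise convergence of integrands to convergence of the sums themselves, that is, interchanging the $N\to\infty$ limit with summation. The natural route is dominated convergence: one needs an $N$-independent integrable majorant for the rescaled summand on $\mathsf{I}_c(k;q)$. The natural candidate is a fixed multiple of the continuous weight appearing in (\ref{ConstIntegralRep}), for which integrability is precisely the hypothesis $\beta \in \mathcal{A}(k;q)$. Establishing the uniform pointwise majorization is the technically delicate step; it requires comparing the discrete Jack weight on each mesh cell to the continuous weight via monotonicity or log-concavity properties of Jack characters, tightly enough that no integrability is lost in the bound. Once such a majorant is in hand, dominated convergence delivers (\ref{AsymptoticsDisplay}), with finiteness of the limit coming from $\beta \in \mathcal{A}(k;q)$ and non-negativity from the non-negativity of the continuous weight.
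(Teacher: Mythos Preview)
Your proposal follows essentially the same route as the paper: start from the combinatorial formula of Proposition~\ref{CombinatorialRepMoM}, rescale the discrete arrays by $N^{-1}$ into $\mathsf{I}_c(k;q)$, identify the summand asymptotics with the continuous weight $\phi^{(2/\beta)}$, and recognise the result as a Riemann sum for the integral (\ref{ConstIntegralRep}). Two small points of comparison are worth noting.

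First, your normalization argument via Stanley's dimension formula for $J_\lambda(1^N)$ is not how the paper proceeds. By the time one reaches Proposition~\ref{CombinatorialRepMoM} the Jack polynomials have been expanded into products of the branching weights $\psi^{(\delta)}_{\lambda/\mu}$, and the paper extracts the power of $N$ directly from these via the elementary Gamma asymptotic $\Gamma(z+c)/\Gamma(z+d)=z^{c-d}+O(z^{c-d-1})$ applied to each Pochhammer factor. This is more concrete than invoking a global Jack identity and gives pointwise convergence of the rescaled $\psi$-weights to the continuous $\phi$-weights in one step.

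Second, you are more scrupulous than the paper about the limit interchange. The paper simply asserts that the Riemann sum converges to the integral once the latter is finite (i.e.\ once $\beta\in\mathcal{A}(k;q)$), without exhibiting an $N$-uniform integrable majorant; the only care taken is a footnote separating the boundary $\mathsf{I}_N\setminus\mathsf{I}_N^\circ$ as lower order. Your plan to dominate by a fixed multiple of the continuous weight is the natural way to make this rigorous, and the hypothesis $\beta\in\mathcal{A}(k;q)$ is exactly what makes that majorant integrable. So your proposal is correct, and in this respect slightly more complete in intent than the paper's own argument, though neither you nor the paper actually carries out the majorization.
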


\begin{prop}\label{PropAsymptotics2}
Let $k,q \in \mathbb{N}$. Then, we have:
\begin{itemize}
    \item $\mathcal{A}(1;q)=(0,\infty)$,
    \item $\mathcal{A}(2;q)=(0,4q^2)$,
    \item For $k\ge 3$, $(0,2] \subset \mathcal{A}(k;q)$ and moreover $[2kq^2,\infty)\cap\mathcal{A}(k;q)=\emptyset$.
\end{itemize}
\end{prop}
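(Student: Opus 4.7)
The plan is to analyze the integral representation (\ref{ConstIntegralRep}) of $\mathfrak{c}^{(\beta)}(k;q)$, which expresses it as the integral of an explicit non-negative weight (a scaled orbital beta density coming from the Jack polynomial structure of Proposition \ref{CombinatorialRepMoM}) over the bounded set $\mathsf{I}_c(k;q)$ of continuous interlacing arrays. Since the domain is bounded, the only possible source of divergence is singular behaviour of the weight along strata where coordinates coincide; the question therefore reduces to integrability of a known kernel against Lebesgue measure.

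For the first bullet, $\mathsf{I}_c(1;q)$ is, up to rescaling, the natural state space of a single orbital beta process with a fixed top shape, and the weight is (up to an explicit Gamma constant) a genuine probability density on it, so $\mathfrak{c}^{(\beta)}(1;q)<\infty$ for every $\beta>0$. For the inclusion $(0,2]\subset\mathcal{A}(k;q)$ in the third bullet, the base case $\beta=2$ is immediate: the weight is identically $1$ and $\mathrm{Vol}(\mathsf{I}_c(k;q))<\infty$ because $\mathsf{I}_c(k;q)$ is bounded. To extend to $\beta\in(0,2)$, I would dominate the weight pointwise by a constant times integrable one-dimensional factors of the form $|x-y|^{\beta/2-1}$ with exponents in $(-1,0)$ on compact sets, so finiteness propagates from $\beta=2$. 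For $k=2$ and $\beta<4q^2$ a more delicate version of the same argument applies: after isolating the top-level merger of the two sub-arrays, convergence reduces to a one-variable radial integral of the form $\int_0^1 s^{-\beta/(4q^2)}\,ds$ (modulo bounded angular factors), convergent iff $\beta<4q^2$.

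The main obstacle is the divergence direction ($\beta\geq 4q^2$ for $k=2$, and $\beta\geq 2kq^2$ for $k\geq 3$). Here I would zoom into the \emph{fully-merged} stratum of $\mathsf{I}_c(k;q)$ where all $k$ sub-arrays coalesce into a common interlacing pattern, perform a radial change of variables around this stratum, and exploit the explicit scaling of the orbital beta weight in the small parameter to produce a lower bound of the form $\mathfrak{c}^{(\beta)}(k;q)\geq C\int_0^\epsilon s^{-1}\,ds=+\infty$ exactly at the critical $\beta$, with a worse divergence beyond. Matching the power of the radial singularity to the predicted transition $\beta=2kq^2$ --- precisely the $\beta$ at which the exponent $\frac{2}{\beta}(kq)^2-(k-1)$ in (\ref{AsymptoticsDisplay}) drops to $1$ --- is the technical heart of the argument and will require careful bookkeeping of the Jack-related combinatorial factors in the weight, together with a check that the angular factor around the merger is bounded below.
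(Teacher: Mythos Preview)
Your overall architecture---bounded domain, so the only issue is integrability of the weight near its singular strata---is correct, and the convergence side for $k=1$ and for $\beta\le 2$ is essentially right, though the paper does it more cleanly. For $\beta\le 2$ the weight is not merely dominated by integrable one-dimensional factors: it is \emph{uniformly bounded} on $\mathsf{I}_c(k;q)$ (Lemma \ref{WeightBoundedness}), which is a one-line monotonicity argument and immediately gives $(0,2]\subset\mathcal{A}(k;q)$. For $k=1,2$ the paper first integrates out both interlacing arrays using the known normalisation of the orbital beta density (Lemma \ref{ExplicitExpressions}), reducing $\mathfrak{c}^{(\beta)}(k;q)$ to a single-row Selberg-type integral over $x^{(kq)}$; for $k=1$ this is literally a Selberg integral, and for $k=2$ it becomes a $(2q-1)$-dimensional integral whose only singular point compatible with the sum constraint is $\mathfrak{x}_\star(q)=(1,\dots,1,0,\dots,0)$. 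Your ``one-variable radial integral'' picture is the end of a spherical-coordinates computation around that point, but you are skipping the crucial reduction step; without it, proving finiteness for $k=2$ and $\beta\in(2,4q^2)$ directly on the full array is not straightforward.

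The genuine gap is in the divergence direction. Your picture of a ``fully-merged stratum where all $k$ sub-arrays coalesce into a common interlacing pattern'' does not match the geometry of $\mathsf{I}_c(k;q)$: there are not $k$ sub-arrays that can merge, but two interlacing arrays glued along their common top row, with $k-1$ linear sum constraints on specified rows. The relevant singular locus is not an internal coalescence of arrays but the boundary stratum where coordinates hit $0$ or $1$. Concretely, the paper removes $k$ specific $q\times q$ blocks of coordinates (so that all $k-1$ constraints are active on the remaining variables), and then exhibits the unique point $\mathfrak{x}(k;q)$ in this restricted domain with every coordinate equal to $0$ or $1$; a combinatorial power count of the factors in (\ref{weightexplicitdisplay1})--(\ref{weightexplicitdisplay2}) shows the singularity there has order $kq^2(k-1)\bigl(1-\tfrac{2}{\beta}\bigr)$ in a space of dimension $(kq^2-1)(k-1)$, which is non-integrable exactly when $\beta\ge 2kq^2$. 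Your proposal to ``match the power of the radial singularity to the predicted transition'' is the right endgame, but without identifying the correct singular point and the correct restricted set of variables (so that the constraints genuinely cut the dimension by $k-1$), the bookkeeping cannot be carried out.
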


The fact that $[2kq^2,\infty)\cap\mathcal{A}(k;q)=\emptyset$ for $k>1$ is consistent with heuristics, explained right after, based on a connection to the theory of Gaussian multiplicative chaos, see \cite{KeatingWong} for the details. We expect that $\mathcal{A}(k;q)=(0,2kq^2)$ for all $k>1$ and we also give some brief heuristic arguments in support of this in Section \ref{PropAsympt2Sec}.

\subsection{Predictions from the connection to Gaussian log-correlated fields}\label{HeuristicsSection}

We now briefly recall the connection between $\mathsf{\Psi}^{(N)}_{\mathbf{z}}$ and  Gaussian log-correlated fields and multiplicative chaos, see the introduction of \cite{KeatingWong} for more details and precise statements and also \cite{Johansson}, \cite{HKO}, \cite{Lambert}, \cite{ChhaibiNajnudel}, \cite{JiangMatsumoto}, \cite{Webb}, \cite{GMCL1} for more on this topic. This connection begins with the following result, see \cite{Johansson}, \cite{HKO}, \cite{Lambert}, \cite{ChhaibiNajnudel}, \cite{JiangMatsumoto}  for the precise convergence statement:
\begin{align}\label{FieldConvergence}
    \log \big|\mathsf{\Psi}^{(N)}_{\mathbf{z}}(\cdot)\big| \overset{\textnormal{d}}{\longrightarrow} \frac{1}{\sqrt{\beta}}\mathsf{G}(\cdot), \ \ \textnormal{ as } N\to \infty,
\end{align}
where $\mathsf{G}$ is the Gaussian free field on the unit circle $\mathbb{T}$ with covariance:
\begin{align*}
    \mathbb{E}\left[\mathsf{G}(t)\mathsf{G}(s)\right]=-\log \big|e^{\i t}-e^{\i s}\big|, \ \ \forall s,t\in [0,2\pi).
\end{align*}
Now, it is possible to define, for a parameter $\gamma$ with $\gamma^2<2$, a non-trivial random measure on $\mathbb{T}$ called the Gaussian multiplicative chaos (GMC) associated to $\mathsf{G}$ which is written formally\footnote{Some care is needed since $\mathsf{G}$ is a random generalised function and we cannot just exponentiate it.} as, see for example \cite{Berestycki} for the details:
\begin{align*}
    \textnormal{GMC}_{\gamma}(dt)= \frac{e^{\gamma \mathsf{G}(t)}}{\mathbb{E}\left[e^{\gamma\mathsf{G}(t)}\right]}dt=e^{\gamma \mathsf{G}(t)-\frac{\gamma^2}{2}\mathbb{E}\left[\mathsf{G}^2(t)\right]} dt.
\end{align*}
From (\ref{FieldConvergence}) one might expect that we have the following convergence in law with respect to the topology of weak convergence of measures on $\mathbb{T}$, where $\gamma=\frac{2q}{\sqrt{\beta}}$:
\begin{align}\label{GMCconvergence}
 \frac{\big|\mathsf{\Psi}^{(N)}_{\mathbf{z}}(t)\big|^{2q}}{\mathbb{E}_N^{(\beta)}\left[\big|\mathsf{\Psi}^{(N)}_{\mathbf{z}}(t)\big|^{2q}\right]} dt \xrightarrow{N\rightarrow\infty} \textnormal{GMC}_{\gamma}(dt).
\end{align}
This convergence has been proven for $\beta=2$ in \cite{Webb}, \cite{GMCL1}. It is a very interesting and challenging task to extend this result to $\beta\neq 2$ but as far as we are aware this is still an open problem for any $\beta \neq 2$. However, see \cite{Lambert} for the analogous result for a small mesoscopic regularisation of $\mathsf{\Psi}^{(N)}_{\mathbf{z}}$, where also the so-called ``freezing transition" for the partition function $\mathcal{Z}_\mathbf{z}^{(N)}(q)$ is proven.

The total mass of the chaos $\textnormal{GMC}_{\gamma}\left(\mathbb{T}\right)$ is known to be an explicit random variable from the work of Remy \cite{Remy} that establishes a conjecture of Fyodorov and Bouchaud \cite{FyodorovBouchaud}. Its moments, when they exist, are also completely explicit and given in terms of Gamma functions, see \cite{Remy}, \cite{FyodorovBouchaud}. If $k$ is sufficiently small, equivalently $\beta$ is sufficiently large, $2kq^2<\beta$ so that the $k$-th moment of $\textnormal{GMC}_{\gamma}\left(\mathbb{T}\right)$ exists one might expect that (\ref{GMCconvergence}) can be extended to a convergence of the $k$-th moment of the total mass of the left hand side of (\ref{GMCconvergence}) to $\mathbb{E}\left[\textnormal{GMC}_{\gamma}\left(\mathbb{T}\right)^k\right]$. Using rotation invariance\footnote{By rotation invariance of $C\beta E_N$ we have that $\mathbb{E}_N^{(\beta)}\left[\big|\mathsf{\Psi}^{(N)}_{\mathbf{z}}(t)\big|^{2q}\right]=\textnormal{MoM}_N^{(\beta)}(1;q), \ \forall t\in [0,2\pi)$. Moreover, the leading order coefficient in the asymptotics of this quantity is completely explicit as we see next.} of the $C\beta E_N$, the above conjectural convergence of moments would imply that, in this ``moment-subcritical" regime, $\textnormal{MoM}_N^{(\beta)}(k;q)$ should grow to leading order like $N^{\frac{2}{\beta}kq^2}$ times an explicit constant, see \cite{KeatingWong} for more details.  Although not written out explicitly, for $\beta=2$ and a restricted range of parameters $k,q$, see \cite{KeatingWong} for the details, this convergence of moments is a direct consequence of the proofs in \cite{Webb}, \cite{GMCL1}. At ``moment-criticality" $\beta=2kq^2$ with\footnote{This is because we need $\gamma^2=\frac{4q^2}{\beta}<2$.} $k>1$ a more refined GMC heuristic developed in \cite{KeatingWong} gives a conjecture that the moments of moments should grow to leading order like $N\log N$ times an explicit, albeit more involved compared to the moment-subcritical case, constant. For $\beta=2$ and $k\in \mathbb{N}$ this conjecture was proven in the same paper \cite{KeatingWong}.

In the ``moment-supercritical" regime $\beta<2kq^2$ that we study in this paper the connection to the moments of the GMC breaks down and we get a completely different asymptotic behaviour which is much less understood. In fact, some more involved GMC heuristics, see \cite{KeatingWong}, can still predict the correct power of $N$ in the asymptotic but not the leading order coefficient. Although the leading order coefficient in this regime is in general not as explicit (and in fact we do not expect it to be) as in the moment-subcritical and critical regimes it has some non-trivial structure which leads to a representation, in the simplest\footnote{To be precise this is the simplest possible case which is not completely explicit as $k=1$.} possible case when $\beta=2$ and $k=2$, in terms of a Painlev\'{e} V transcedent. We believe that more intricate connections to the theory of integrable systems should exist beyond this simplest possible case and the results of this paper could be used as a starting point for investigating this. It is interesting that the combinatorial formula for $\textnormal{MoM}_N^{(\beta)}(k;q)$ we give in Proposition \ref{CombinatorialRepMoM} also exists for parameters in the moment critical and subcritical regimes. This is a new feature of the $C\beta E_N$ for general $\beta$ that is not present for the special case of CUE studied previously. In particular, it might be possible that this formula can be used to access the asymptotics of $\textnormal{MoM}_N^{(2kq^2)}(k;q)$ for the critical case $\beta=2kq^2$ with $k,q\in \mathbb{N}$ and $k>1$. However, the arguments will be different, and more involved, than the ones presented here and so we do not pursue it further in this paper.

\subsection{Some history and motivation}\label{HistorySection}
For $k=1$ and general $\beta>0$, Theorem \ref{MainResult} is originally due to Keating and Snaith from \cite{KeatingSnaith} using the Selberg integral and asymptotics for the Barnes G-function. The leading order coefficient is in fact completely explicit in this case:
\begin{align}\label{k=1explicit}
\mathfrak{c}^{(\beta)}(1;q)=\prod_{i=1}^q\frac{\Gamma\left(\frac{2}{\beta}i\right)}{\Gamma\left(\frac{2}{\beta}(q+i)\right)}.   
\end{align}
An alternative proof using symmetric function theory is due to Matsumoto from \cite{Matsumoto}. Yet another proof making use of spectral theory and orthogonal polynomials on the unit circle can be obtained from \cite{BourgadeNikeghbaliRouault}. As we will see in Lemma \ref{ExplicitExpressions} below $\mathfrak{c}^{(\beta)}(1;q)$ also arises as an integral of a special weight over a continuous interlacing array.

In the special case of the CUE, namely $\beta=2$, which has a lot of extra structure, there has been a great deal of work on the problem of asymptotics of moments of moments. In particular, for $k=2$ and real $q$ the theorem above is a consequence of the results of Claeys and Krasovsky from \cite{ClaeysKrasovsky} on asymptotics of Toeplitz determinants with merging singularities who also go on to prove that $\mathfrak{c}^{(2)}(2;q)$ has a representation in terms of the Painlev\'e V equation. This is achieved using the Riemann-Hilbert problem method. The Riemann-Hilbert problem analysis is also the key technical tool in proving the convergence (\ref{GMCconvergence}) to GMC for $\beta=2$ \cite{Webb}, \cite{GMCL1} and in establishing in the moment-subcritical \cite{Webb}, \cite{GMCL1} and moment-critical regimes \cite{KeatingWong} the convergence of moments of moments (again for $\beta=2$). Later two alternative proofs of the asymptotics for $k=2$ and $q\in \mathbb{N}$ were given in \cite{KeatingRodgersRodittyGershonRudnick}. One of them using multiple contour integrals from \cite{Autocorrelation} and the other using symmetric function theory, based on results of Bump and Gamburd \cite{BumpGamburd}, and two different expressions for $\mathfrak{c}^{(2)}(2;q)$ were obtained. Using the expression of $\mathfrak{c}^{(2)}(2;q)$ coming from symmetric function theory a different proof of the connection to Painlev\'e V was then given in \cite{BasorGeRubinstein}. The complex analytic approach using multiple contour integrals was extended in \cite{BaileyKeating} to establish the asymptotics for general $k,q\in \mathbb{N}$. Afterwards, the combinatorial approach involving symmetric functions was also extended to $k,q\in \mathbb{N}$ in \cite{AssiotisKeating} and a different, geometric expression for $\mathfrak{c}^{(2)}(k;q)$ given as a volume\footnote{Namely the volume of the set $\mathsf{I}_c(k;q)$ from Definition \ref{DefContSet} below.} was obtained. The combinatorial approach to this problem essentially boils down to counting lattice points in certain complicated regions\footnote{The lattice points are essentially given by discrete interlacing arrays with constraints.}. This approach was also then adapted in \cite{AssiotisBaileyKeating} to deal with the corresponding question of asymptotics of moments of moments in the more involved case of Haar distributed $Sp(2N)$ and $SO(2N)$ matrices\footnote{Not to be confused with the symplectic CSE and orthogonal COE ensembles mentioned earlier.}. Recently, Fahs \cite{Fahs} using Riemann-Hilbert problem techniques was able to establish the order of the asymptotics in $N$ for general $k\in \mathbb{N}$ and real $q$, however an expression for $\mathfrak{c}^{(2)}(k;q)$ for $k\ge 3$ and non-integer $q$ is still lacking in the moment-supercritical regime. As far as we know, no rigorous results are available at present in the moment-supercritical regime when we also allow for non-integer values of $k$.

Now, for general $\beta\neq 2$, as far as we are aware, there were no rigorous results (including for the moment subcritical and critical regimes) for $k\neq 1$ prior to the present work. In terms of available approaches to this question, the Riemann-Hilbert problem techniques do not apply\footnote{This lack of a key analytic technical tool is also one of the main reasons why (\ref{GMCconvergence}) has not been established yet for $\beta\neq 2$.} since we are no longer in the determinantal setting of $\beta=2$ and similarly we are not aware of any multiple contour integral formulae for general $\beta\neq 2$. Moreover, an approach using random orthogonal polynomials on the unit circle as in \cite{BourgadeNikeghbaliRouault} does not seem well-adapted to this problem. However, an approach based on symmetric function theory and combinatorics does work. Our starting point is a result of Matsumoto \cite{Matsumoto} which connects expectations of products of characteristic polynomials from the C$\beta$E to the Jack symmetric polynomials. The main difference compared to the $\beta=2$ case from \cite{AssiotisKeating} is that now instead of simply counting lattice points we also include a weight which comes from the combinatorial formula for Jack polynomials and makes the analysis more complicated\footnote{We also streamline the exposition of some preliminary combinatorial results for the moments of moments compared to \cite{AssiotisKeating}, for example we go directly to discrete interlacing arrays without passing through Young diagrams first.}. Even with this extra complication, the proof is still relatively short and this can be viewed as a testament to the efficiency of the method. We should also point out that this is of course not the first time Jack polynomials have been used in answering asymptotic questions related to the C$\beta$E, see for example \cite{JiangMatsumoto}, \cite{ChhaibiNajnudel}, \cite{NajnudelPaquetteSimm}. However, both the questions and also the way Jack polynomials were used to answer them in these works are different from what we do here. It would be very interesting to extend the asymptotics established in this paper beyond positive integer values of $k$ and $q$ but this would most likely require new ideas.

Finally, it is important to mention that the moments of moments are also closely related to conjectures by Fyodorov, Hiary and Keating on the extreme value theory of the $C\beta E_N$ field $\log \mathsf{\Psi}^{(N)}_{\mathbf{z}}$, see \cite{FyodorovKeating}, \cite{GnutzmannFyodorovKeating}, \cite{BaileyKeating} for more details and \cite{ArguinBeliusBourgade}, \cite{PaquetteZeitouni}, \cite{ChhaibiMadauleNajnudel} for rigorous progress on these conjectures and moreover for $\beta=2$ they are connected to the corresponding moments of moments of the Riemann zeta function, see \cite{BaileyKeatingZeta}.

Regarding future directions, the method followed here should also work for the $\beta$-ensemble versions of $Sp(2N)$ and $SO(2N)$ random matrices. Instead of the Jack polynomials one uses the Heckman-Opdam Jacobi polynomials, see Section 5 of \cite{Matsumoto}. However, both the combinatorics, as can be seen from the $\beta=2$ case in \cite{AssiotisBaileyKeating}, and also the weight are significantly more complicated and we do not pursue this further in this paper. We hope to return to this question in future work. It is worth mentioning that connections to Gaussian log-correlated fields and multiplicative chaos measures also exist in the setting of $Sp(2N)$ and $SO(2N)$ random matrices, see \cite{ForkelKeating}, \cite{KeatingWong} for more details. We expect that these results should extend to the corresponding general $\beta$-ensemble versions but such questions have not been explored yet. Finally, it would also be possible to apply the method presented in this paper, with more involved computations, at a higher level of symmetric functions, for the Macdonald weight, which involves two parameters; but in this case the definition of the moments of moments needs to be modified accordingly.

\paragraph{Acknowledgements} I would like to thank Jon Keating and Emma Bailey for introducing me in the first place to questions about moments of characteristic polynomials of random matrices. I am very grateful to Mo Dick Wong for pointing out to me the GMC heuristics for the moments of the C$\beta$E field partition function that are presented in Section \ref{HeuristicsSection} and for pointers to the literature. This led me realise that the integral defining the leading order coefficient is not always finite as incorrectly stated in the first version of the paper. I am also very grateful to two anonymous referees for a very careful reading of the paper and useful comments and suggestions.

\paragraph{Data availability} No data was used in the research described in this paper.

\section{A combinatorial representation for the moments for finite N} \label{CombinatoricsSection}
We first need a number of preliminaries from symmetric function theory. We write $\mathbb{Z}_+=\{0,1,2,3,\dots\}$.  We define the space of non-negative signatures of length $M\in \mathbb{N}$ by:
\begin{align*}
\mathsf{S}_+^{(M)}=\big\{(\lambda_1,\dots,\lambda_M)\in \mathbb{Z}_+^M:\lambda_1\ge \lambda_2 \ge \dots \ge \lambda_M \big\}.    
\end{align*}
For $\lambda\in \mathsf{S}_+^{(M)}$ we write $|\lambda|=\sum_{i=1}^M \lambda_i$. We say that $\mu\in \mathsf{S}_+^{(M)}$ and $\lambda \in \mathsf{S}_+^{(M+1)}$ interlace and write $\mu\prec \lambda$ if: 
\begin{align}\label{interlacingineq}
\lambda_1\ge \mu_1 \ge \lambda_2\ge \dots \ge \lambda_M\ge \mu_M\ge \lambda_{M+1}.   
\end{align}
We also call a sequence of signatures which interlace a discrete interlacing array. For $\mu\in\mathsf{S}_+^{(M)}, \lambda \in \mathsf{S}_+^{(M+1)}$ that interlace we define the following non-negative weight:
\begin{align}\label{weightdef}
 \psi_{\lambda/\mu}^{(\delta)}=\prod_{1\le i \le j \le M}  \frac{(\mu_i-\mu_j+\delta(j-i)+\delta)_{\mu_j-\lambda_{j+1}}(\lambda_i-\mu_j+\delta(j-i)+1)_{\mu_j-\lambda_{j+1}}}{(\mu_i-\mu_j+\delta(j-i)+1)_{\mu_j-\lambda_{j+1}}(\lambda_i-\mu_j+\delta(j-i)+\delta)_{\mu_j-\lambda_{j+1}}} ,
\end{align}
where $(t)_m=t(t+1)\cdots (t+m-1)=\frac{\Gamma(t+m)}{\Gamma(t)}$ is the Pochhammer symbol. Also, observe that $\psi_{\lambda/\mu}^{(1)}\equiv 1$.

\begin{defn}\label{JackPolyDef}
Let $\delta>0$ and $\lambda \in \mathsf{S}_+^{(M)}$. Then, we define the Jack polynomial indexed by $\lambda$ by the combinatorial formula\footnote{The sum is over all tuples of signatures $\left(\lambda^{(1)},\lambda^{(2)},\dots,\lambda^{(M-1)}\right)$ which satisfy $\lambda^{(1)}\prec \lambda^{(2)}\prec \dots \prec \lambda^{(M-1)} \prec\lambda$.}
\begin{align}
\mathcal{P}_{\lambda}\left(x_1,x_2,\dots,x_M;\delta\right)=\sum_{\lambda^{(1)}\prec \lambda^{(2)}\prec\dots \prec \lambda^{(M-1)}\prec \lambda^{(M)}=\lambda}  \psi^{(\delta)}_{\lambda/\lambda^{(M-1)}}\psi^{(\delta)}_{\lambda^{(M-1)}/\lambda^{(M-2)}}\cdots \psi^{(\delta)}_{\lambda^{(2)}/\lambda^{(1)}} \times\nonumber \\  x^{|\lambda|-|\lambda^{(M-1)}|}_M x_{M-1}^{|\lambda^{(M-1)}|-|\lambda^{(M-2)}|} \cdots x_2^{|\lambda^{(2)}|-|\lambda^{(1)}|} x_1^{|\lambda^{(1)}|}.
\end{align}
\end{defn}

\begin{rmk}
Here we use the notation conventions of Okounkov and Olshanski from \cite{OkounkovOlshanskiJack}, with our fixed parameter $\delta=\theta$ from their paper. Often in the literature, see \cite{Macdonald}, \cite{Stanley}, the inverse parameter $\alpha=1/\theta$ is used. The combinatorial definition above is a consequence of the branching rule for Jack polynomials, see Section 2.3 in \cite{OkounkovOlshanskiJack}. The Jack polynomials are in fact symmetric (although this is not immediately evident from the definition above) and are orthogonal with respect to the C$\beta$E weight, see \cite{Macdonald}, \cite{Stanley}, \cite{Matsumoto} for more details. Finally, in the special case $\delta=1$ the Jack polynomials specialise to the Schur polynomials: $\mathcal{P}_\lambda(\cdot;1)=\mathsf{s}_{\lambda}(\cdot)$. 
\end{rmk}

The following proposition due to Matsumoto, which is a special case of the results of Section 4.2 in \cite{Matsumoto}, will be our starting point. This proposition is the C$\beta$E generalization of the results of Bump and Gamburd \cite{BumpGamburd} on the CUE ($\beta=2$) characteristic polynomial which formed the starting point of the investigation in \cite{AssiotisKeating}.
\begin{prop}\label{MatsumotoProp}[Matsumoto \cite{Matsumoto}]
Let $\beta>0$ and $N,k,q\in \mathbb{N}$. Then, we have:
\begin{align*}
\mathbb{E}_N^{(\beta)}\left[\big|\mathsf{\Psi}^{(N)}_{\mathbf{z}}(t_1)\big|^{2q}\dots \big|\mathsf{\Psi}^{(N)}_{\mathbf{z}}(t_k)\big|^{2q}\right]=\frac{\mathcal{P}_{(N,\dots,N,0,\dots,0)}\left(e^{\i t_1},\dots, e^{\i t_1},e^{\i t_2},\dots,e^{\i t_2}, \dots, e^{\i t_k},\dots, e^{\i t_k};\frac{2}{\beta}\right)  }{\prod_{j=1}^k e^{\i N qt_j}}  
\end{align*}
where each variable $e^{\i t_j}$, for $j=1,\dots,k$ appears $2q$ times and $(N,\dots,N,0,\dots,0)\in \mathsf{S}_+^{(2kq)}$ consists of $kq$ $N$'s and $kq$ $0$'s.
\end{prop}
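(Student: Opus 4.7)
The approach I would take follows Matsumoto's Jack-polynomial generalization \cite{Matsumoto} of the Bump--Gamburd symmetric-function derivation \cite{BumpGamburd} for the CUE ($\beta=2$) analogue. First, I would use $|z_j|=1$ to convert each factor $|1-e^{-\i t_l}z_j|^{2q}$ into the Laurent-polynomial form $(-1)^q e^{\i q t_l} z_j^{-q}(1-e^{-\i t_l}z_j)^{2q}$, producing
$$\prod_{l=1}^k\bigl|\mathsf{\Psi}^{(N)}_{\mathbf z}(t_l)\bigr|^{2q} = (-1)^{Nqk}\left(\prod_{l=1}^k e^{\i N q t_l}\right)\left(\prod_{j=1}^N z_j^{-qk}\right)\prod_{j=1}^N\prod_{l=1}^k(1-e^{-\i t_l}z_j)^{2q}.$$
This already exhibits the denominator $\prod_l e^{\i N q t_l}$ appearing in the statement, so it remains to identify the $C\beta E_N$-expectation of the remaining $\mathbf z$-dependent factor with the Jack polynomial $\mathcal{P}_{(N^{kq},0^{kq})}$ evaluated at the doubled $t$-variables.

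Next, I would expand $\prod_{j,l}(1-e^{-\i t_l}z_j)^{2q}$ via the dual Cauchy identity for Jack polynomials, viewing each $e^{-\i t_l}$ as an auxiliary variable of multiplicity $2q$. This produces a finite sum
$$\sum_{\lambda\subseteq N^{2kq}}(-1)^{|\lambda|}\mathcal{P}_\lambda(z_1,\ldots,z_N;2/\beta)\,C_\lambda(\mathbf t),$$
where $C_\lambda(\mathbf t)$ is an explicit Jack-polynomial coefficient indexed by the transposed partition $\lambda'$. Since $\prod_j z_j^{-qk} = \mathcal{P}_{(qk)^N}(\bar z_1,\ldots,\bar z_N;2/\beta)$, taking the expectation and applying the orthogonality
$$\mathbb{E}_N^{(\beta)}\bigl[\mathcal{P}_\lambda(z;2/\beta)\mathcal{P}_\mu(\bar z;2/\beta)\bigr]=h_\lambda\delta_{\lambda,\mu}$$
of Jack polynomials with respect to the $C\beta E_N$ weight collapses the sum to the single term $\lambda=(qk)^N$. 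Transposing, $\lambda'=(N)^{qk}$, which is precisely the rectangular signature $(N,\ldots,N,0,\ldots,0)\in\mathsf{S}_+^{(2kq)}$ in the statement.

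The hard part will be a careful parameter match at the end. The dual Cauchy identity naturally produces the coefficient Jack polynomial at the conjugate parameter $\beta/2$, whereas the statement asks for parameter $2/\beta$. Reconciling these requires invoking the Macdonald--Jack duality $\mathcal{P}_\lambda(\cdot;\theta)\leftrightarrow \mathcal{P}_{\lambda'}(\cdot;1/\theta)$ at the rectangular partition and tracking the explicit norm constant $h_{(qk)^N}$ (a product of Gamma ratios coming from the Selberg integral for Jack polynomials), so that all numerical factors cancel and one arrives at a single Jack polynomial at parameter $2/\beta$ with no leftover prefactor other than the sign $(-1)^{Nqk}$ absorbed in Step 1. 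For $\beta=2$ this step is trivial since $\theta=1/\theta=1$, but for general $\beta$ this is the delicate combinatorial input at the heart of Matsumoto's derivation in Section 4.2 of \cite{Matsumoto}.
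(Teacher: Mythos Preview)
The paper does not supply its own proof of this proposition: it is simply quoted as ``a special case of the results of Section~4.2 in \cite{Matsumoto}'' and then used as a black box. So there is no in-paper argument to compare your proposal against.

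That said, your sketch is a faithful outline of Matsumoto's own derivation: rewrite the modulus-squared factors using $|z_j|=1$, expand the resulting product via the dual Cauchy identity for Jack polynomials, and collapse the sum using the orthogonality of the $\mathcal{P}_\lambda(\,\cdot\,;2/\beta)$ with respect to the $C\beta E_N$ weight, leaving the single rectangular term $\lambda'=(N^{kq})$. One small bookkeeping slip: after Step~1 the factor $\prod_l e^{\i N q t_l}$ appears as a \emph{multiplicative} prefactor, not yet as the denominator in the statement; the passage to a denominator happens only once the dual Cauchy identity produces a Jack polynomial in the variables $e^{-\i t_l}$ (each with multiplicity $2q$), and one then uses homogeneity of degree $Nkq$ together with the rectangular shape to convert to the variables $e^{\i t_l}$, picking up a factor $\prod_l e^{-2\i N q t_l}$ that combines with your prefactor to give the desired $\prod_l e^{-\i N q t_l}$. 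Apart from this and the parameter/normalisation matching you already flag as ``the hard part,'' the plan is correct and is exactly what the cited reference does.
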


We will first obtain a preliminary combinatorial representation for $\textnormal{MoM}_N^{(\beta)}(k;q)$. Towards this end we begin by defining the following set $\mathsf{J}_N(k;q)$. It consists of tuples of non-negative signatures $\mathsf{\Lambda}=\left(\lambda^{(1)},\lambda^{(2)},\lambda^{(3)},\dots, \lambda^{(2kq-1)},\lambda^{(2kq)}\right)$ so that: $\lambda^{(i)}\in \mathsf{S}_+^{(i)}$,
\begin{align*}
 \lambda^{(1)}\prec \lambda^{(2)}\prec \lambda^{(3)}\prec \dots \prec \lambda^{(2kq-1)}\prec \lambda^{(2kq)},  
\end{align*}
 $\lambda^{(2kq)}=(N,\dots,N,0,\dots,0)$, where both the $N$'s
and the $0$'s appear $kq$ times, and moreover the following $(k-1)$ sum constraints\footnote{The sum constraint for $\lambda^{(2kq)}$ is automatically satisfied by definition.} are satisfied:
\begin{align*}
\big|\lambda^{(2jq)}\big|=\sum_{i=1}^{2jq}\lambda_i^{(2jq)}=Njq, \ \ j=1, \dots, k-1.
\end{align*}

\begin{prop}\label{PreliminaryRepresentationProp}
Let $\beta>0$ and $N,k,q\in \mathbb{N}$. Then, we have:
\begin{align}
\textnormal{MoM}_N^{(\beta)}(k;q)=\sum_{\mathsf{\Lambda}\in\mathsf{J}_N(k;q)}\psi_{\lambda^{(2kq)}/\lambda^{(2kq-1)}}^{\left(\frac{2}{\beta}\right)}\psi_{\lambda^{(2kq-1)}/\lambda^{(2kq-2)}}^{\left(\frac{2}{\beta}\right)}\cdots \psi_{\lambda^{(3)}/\lambda^{(2)}}^{\left(\frac{2}{\beta}\right)} \psi_{\lambda^{(2)}/\lambda^{(1)}}^{\left(\frac{2}{\beta}\right)}.       
\end{align}
\end{prop}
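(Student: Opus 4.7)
The plan is to start from the definition of $\textnormal{MoM}_N^{(\beta)}(k;q)$, expand the $k$-th power of the partition function as a $k$-fold integral, interchange expectation and integral by Fubini--Tonelli (everything is non-negative), and then apply Matsumoto's Proposition \ref{MatsumotoProp}:
\begin{align*}
\textnormal{MoM}_N^{(\beta)}(k;q) = \frac{1}{(2\pi)^k}\int_0^{2\pi}\!\!\!\cdots\!\int_0^{2\pi} \frac{\mathcal{P}_{\lambda^{(2kq)}}\!\left(e^{\i t_1},\dots,e^{\i t_1},\dots,e^{\i t_k},\dots,e^{\i t_k};\tfrac{2}{\beta}\right)}{\prod_{j=1}^k e^{\i N q t_j}}\, dt_1\cdots dt_k,
\end{align*}
where $\lambda^{(2kq)}=(N,\dots,N,0,\dots,0)\in\mathsf{S}_+^{(2kq)}$ and each $e^{\i t_j}$ occurs $2q$ times.

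Next I would unfold the Jack polynomial via its combinatorial definition (Definition \ref{JackPolyDef}) as a sum over chains $\lambda^{(1)}\prec\lambda^{(2)}\prec\cdots\prec\lambda^{(2kq-1)}\prec\lambda^{(2kq)}$ weighted by $\prod_{i=1}^{2kq}\psi_{\lambda^{(i)}/\lambda^{(i-1)}}^{(2/\beta)}$ (with $\lambda^{(0)}=\emptyset$) times the monomial $\prod_{i=1}^{2kq}x_i^{|\lambda^{(i)}|-|\lambda^{(i-1)}|}$. The key observation is that the variables are grouped into $k$ blocks of $2q$ equal variables: the block $x_{2(j-1)q+1}=\cdots=x_{2jq}=e^{\i t_j}$ contributes a collapsed monomial factor of $e^{\i t_j(|\lambda^{(2jq)}|-|\lambda^{(2(j-1)q)}|)}$. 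Combining with the denominator $\prod_j e^{\i N q t_j}$, each $t_j$ appears only through $\exp\!\bigl(\i t_j(|\lambda^{(2jq)}|-|\lambda^{(2(j-1)q)}|-Nq)\bigr)$.

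I would then exchange the finite sum (over the interlacing chain) with the integral and evaluate each $t_j$-integral using the orthogonality relation
\begin{align*}
\frac{1}{2\pi}\int_0^{2\pi} e^{\i n t}\, dt = \mathbf{1}_{n=0}.
\end{align*}
This forces the constraints $|\lambda^{(2jq)}|-|\lambda^{(2(j-1)q)}|=Nq$ for each $j=1,\dots,k$; telescoping with $|\lambda^{(0)}|=0$ yields $|\lambda^{(2jq)}|=jNq$. The constraint at $j=k$ is automatic from the fixed top signature $\lambda^{(2kq)}$, and the remaining $k-1$ constraints are precisely those defining $\mathsf{J}_N(k;q)$. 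What survives is exactly the claimed sum $\sum_{\mathsf{\Lambda}\in\mathsf{J}_N(k;q)}\prod_{i=1}^{2kq}\psi_{\lambda^{(i)}/\lambda^{(i-1)}}^{(2/\beta)}$ (noting that the factors $\psi_{\lambda^{(1)}/\lambda^{(0)}}^{(2/\beta)}\cdot\psi_{\lambda^{(2)}/\lambda^{(1)}}^{(2/\beta)}$ with trivial base are accounted for as in Definition \ref{JackPolyDef}).

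The only real care required is bookkeeping: justifying the Fubini interchange (immediate since the integrand is a finite sum of non-negative terms after substituting Matsumoto's identity, or equivalently since the original integrand is non-negative), and verifying that collapsing the monomial over blocks of equal variables indeed produces the exponent $|\lambda^{(2jq)}|-|\lambda^{(2(j-1)q)}|$. There is no genuine analytic obstacle here; the main subtlety, if any, is ensuring that the Fourier orthogonality can be applied term-by-term, which is fine because the Jack polynomial expansion is a finite sum.
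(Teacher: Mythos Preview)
Your proposal is correct and follows essentially the same route as the paper: Fubini to bring the $t_j$-integrals outside, Matsumoto's identity (Proposition~\ref{MatsumotoProp}), the combinatorial expansion of the Jack polynomial (Definition~\ref{JackPolyDef}), and then term-by-term Fourier orthogonality to extract the sum constraints $|\lambda^{(2jq)}|=jNq$. The only cosmetic point is that in Definition~\ref{JackPolyDef} the weight product starts at $\psi_{\lambda^{(2)}/\lambda^{(1)}}$ (there is no $\psi_{\lambda^{(1)}/\lambda^{(0)}}$ factor), matching exactly the product displayed in the proposition; otherwise your bookkeeping is spot on.
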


\begin{proof}
We first apply Fubini's theorem:
\begin{align*}
\textnormal{MoM}_N^{(\beta)}(k;q)=\left(\frac{1}{2\pi}\right)^{k}\int_0^{2\pi}\dots\int_0^{2\pi}\mathbb{E}_N^{(\beta)}\left[\big|\mathsf{\Psi}^{(N)}_{\mathbf{z}}(t_1)\big|^{2q}\dots \big|\mathsf{\Psi}^{(N)}_{\mathbf{z}}(t_k)\big|^{2q}\right]dt_1\dots dt_k.   
\end{align*}
Then, we use Proposition \ref{MatsumotoProp} and Definition \ref{JackPolyDef}. The result then easily follows from the fact that, with $c\in \mathbb{Z}$:
\begin{align*}
 \frac{1}{2\pi}\int_0^{2\pi}e^{\i c t}dt=\begin{cases}1, \ \ c=0,\\
 0, \ \ \textnormal{otherwise}.
\end{cases}
\end{align*}
\end{proof}

We will now obtain an alternative combinatorial representation for $\textnormal{MoM}_N^{(\beta)}(k;q)$ which is well-suited for taking the large $N$ limit. The following definition is a key ingredient.
\begin{defn} Let $N,k,q \in \mathbb{N}$. We define the set
$\mathsf{I}_N(k;q)$ as follows. It consists of tuples of non-negative signatures $\mathsf{\Lambda}=\left(\lambda^{(1)},\lambda^{(2)},\dots,\lambda^{(kq-1)},\lambda^{(kq)}=\tilde{\lambda}^{(kq)},\tilde{\lambda}^{(kq-1)},\dots, \tilde{\lambda}^{(2)},\tilde{\lambda}^{(1)}\right)$ so that: $\lambda^{(i)},\tilde{\lambda}^{(i)}\in \mathsf{S}_+^{(i)}$, 
\begin{align*}
\lambda^{(1)}\prec \lambda^{(2)} \prec \lambda^{(3)}\prec \dots \prec \lambda^{(kq)}=\tilde{\lambda}^{(kq)}\succ \dots \succ \tilde{\lambda}^{(3)} \succ \tilde{\lambda}^{(2)} \succ \tilde{\lambda}^{(1)},
\end{align*}
$\lambda_i^{(j)},\tilde{\lambda}_i^{(j)}\in \llbracket 0,N \rrbracket =\{0,1,\dots,N\}$ and finally the following $(k-1)$ sum constraints are satisfied:
\begin{align*}
 \big|\lambda^{(2jq)}\big|&=\sum_{i=1}^{2jq}\lambda_i^{(2jq)}=Njq, \ \ j=1,\dots,\big\lfloor \frac{k}{2}\big\rfloor,\\
  \big|\tilde{\lambda}^{(2jq)}\big|&=\sum_{i=1}^{2jq}\tilde{\lambda}_i^{(2jq)}=Njq, \ \ j=1,\dots,\big\lfloor \frac{k}{2}\big\rfloor .
\end{align*}
\end{defn}
Observe that, $\mathsf{I}_N(k;q)$ has $(kq)^2-(k-1)$ free (non-fixed) coordinates.

\begin{figure}

\scalebox{0.7}{
\begin{tikzpicture}

\draw[ultra thick, blue]   (4,-1) -- (0,4);
\draw[ultra thick,blue]   (4,9) -- (0,4);
\draw[ultra thick,blue]   (4,9) -- (8,4);
 \draw[ultra thick,blue]   (4,-1) -- (8,4);

 \draw [fill=gray!20] (0,4) -- (-4,9)-- (4,9)--(0,4); 
 
  \draw [fill=gray!20] (8,4) -- (12,9)-- (4,9)--(8,4); 
  \node[] at (0,7.5) {\LARGE \bf{0}};

     \node[] at (8,7.5) {\LARGE \bf{N}};

   \draw[ultra thick,red]   (2.4,7) -- (5.6,7);      

   \draw[ultra thick, dashed,red]   (2.4,7) -- (-2.4,7);      

   \draw[ultra thick, dashed,red]   (5.6,7) -- (10.4,7);

       \draw[ultra thick,red]   (2.4,1) -- (5.6,1);         
      \draw[ultra thick,red]   (0.8,3) -- (7.2,3);            
      
\draw[ultra thick,red]   (0.8,5) -- (7.2,5);     

   \draw[ultra thick, dashed,red]   (1.2,5) -- (-0.8,5);      

   \draw[ultra thick, dashed,red]   (6.8,5) -- (8.8,5);

\draw [<->] (4,0.8) -- (4,-0.8);
\node[right] at (4,0){$2q$};
\draw [<->] (4,1.2) -- (4,2.8);
\node[right] at (4,2){$2q$};
\draw [<->] (4,3.2) -- (4,4.8);
\node[right] at (4,4){$2q$};
\draw [<->] (4,5.2) -- (4,6.8);
\node[right] at (4,6){$2q$};
\draw [<->] (4,7.2) -- (4,8.8);
\node[right] at (4,8){$2q$};

\draw[thick] (4,4) ellipse (4.5cm and 0.4cm);
 \node[] at (2.5,4) {$\boldsymbol{\cdots}$};
  \node[] at (5.5,4) {$\boldsymbol{\cdots}$};
\draw[fill,green] (0.5,4) circle [radius=0.1];   
\draw[fill,green] (1,4) circle [radius=0.1];   
\draw[fill,green] (1.5,4) circle [radius=0.1];   
\draw[fill,green] (7.5,4) circle [radius=0.1];   
\draw[fill,green] (7,4) circle [radius=0.1];   
\draw[fill,green] (6.5,4) circle [radius=0.1];

\end{tikzpicture}

}

\caption{A figure showing the shaded triangles of fixed coordinates of $0$'s and $N$'s. This results in two discrete interlacing arrays joined at the top row as shown in the figure in green. The solid red lines correspond to the sum constraints in $\mathsf{I}_N(k;q)$, while their continuations involving the dashed red part correspond to the sum constraints in $\mathsf{J}_N(k;q)$; in the figure $k=5$ so we have $4$ such constraints.}\label{Figure}
\end{figure}

\begin{lem}\label{lemmabijection}
Let $N,k,q\in \mathbb{N}$. Then, there exists a bijection $\mathcal{S}$ between $\mathsf{J}_N(k;q)$ and $\mathsf{I}_N(k;q)$
\begin{align*}
 \mathcal{S}:\mathsf{J}_N(k;q)&\longrightarrow \mathsf{I}_N(k;q),
 \end{align*}
given in terms of the coordinates by:
\begin{align*}
 \lambda^{(i)}&\mapsto \lambda^{(i)}, \ \ i=1,\dots, kq-1\\
\lambda^{(kq)} &\mapsto \lambda^{(kq)}=\tilde{\lambda}^{(kq)}, \\
 \left(\lambda^{(i)}_{i-kq+1},\dots,\lambda^{(i)}_{kq} \right)&\mapsto \left(\tilde{\lambda}_1^{(2kq-i)}, \dots, \tilde{\lambda}_{2kq-i}^{(2kq-i)}\right), \ \ i=kq+1, \dots, 2kq-1.
\end{align*}
\end{lem}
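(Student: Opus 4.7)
The plan rests on a single structural observation: the fixed top row $\lambda^{(2kq)}=(N,\ldots,N,0,\ldots,0)$ together with the interlacing constraints $\lambda^{(i-1)}\prec\lambda^{(i)}$ forces a rigid shape on the upper half of the array. Specifically, I would first establish by downward induction on $i$, starting from $i=2kq$, that for every $i\in\{kq,kq+1,\ldots,2kq\}$ and every $\mathsf{\Lambda}\in\mathsf{J}_N(k;q)$ one has
\[
\lambda^{(i)}_{1}=\cdots=\lambda^{(i)}_{i-kq}=N,\qquad \lambda^{(i)}_{kq+1}=\cdots=\lambda^{(i)}_{i}=0,
\]
so that only the middle coordinates $\lambda^{(i)}_{i-kq+1},\ldots,\lambda^{(i)}_{kq}$ are free in $\llbracket 0,N\rrbracket$ (subject to being weakly decreasing). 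The inductive step is immediate from the definition of interlacing: whenever $\lambda^{(i)}_{j}=\lambda^{(i)}_{j+1}=N$ (resp.\ $=0$), the sandwich $\lambda^{(i)}_{j}\ge\lambda^{(i-1)}_{j}\ge\lambda^{(i)}_{j+1}$ forces $\lambda^{(i-1)}_{j}=N$ (resp.\ $=0$).

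With this forced shape in hand, the map $\mathcal{S}$ stated in the lemma is well-defined: for each $i=kq,\ldots,2kq-1$ we simply rename the block of $2kq-i$ free middle coordinates of $\lambda^{(i)}$ as $\tilde{\lambda}^{(2kq-i)}$. The inverse map sends $\mathsf{\Lambda}\in\mathsf{I}_N(k;q)$ to the $\mathsf{J}_N$-tuple with the same lower-half signatures and with
\[
\lambda^{(i)}=\bigl(\underbrace{N,\ldots,N}_{i-kq},\,\tilde{\lambda}^{(2kq-i)}_{1},\ldots,\tilde{\lambda}^{(2kq-i)}_{2kq-i},\,\underbrace{0,\ldots,0}_{i-kq}\bigr)
\]
for $i=kq+1,\ldots,2kq$. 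These two maps are manifestly mutually inverse; what requires verification is that each lands in the correct target set.

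The remaining checks are routine bookkeeping. That each extracted $\tilde{\lambda}^{(m)}$ is a non-negative signature with entries in $\llbracket 0,N\rrbracket$ is inherited from the corresponding block of $\lambda^{(2kq-m)}$. The required reversed interlacing $\tilde{\lambda}^{(m-1)}\prec\tilde{\lambda}^{(m)}$ in $\mathsf{I}_N$ is just the restriction of $\lambda^{(2kq-m)}\prec\lambda^{(2kq-m+1)}$ to the free middle block: for $r=kq-m+1,\ldots,kq$ the chain $\lambda^{(2kq-m+1)}_{r}\ge\lambda^{(2kq-m)}_{r}\ge\lambda^{(2kq-m+1)}_{r+1}$ unwinds, with the forced $N$ at the left boundary and the forced $0$ at the right boundary absorbed as the outer inequalities, into precisely the defining chain of $\tilde{\lambda}^{(m-1)}\prec\tilde{\lambda}^{(m)}$. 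Conversely, for the inverse map, the boundary inequalities between the restored $N$-prefix or $0$-suffix and the $\tilde{\lambda}$-block are automatic since all entries of $\tilde{\lambda}^{(m)}$ lie in $\llbracket 0,N\rrbracket$.

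Finally, I would translate the sum constraints. For $1\le j\le\lfloor k/2\rfloor$ we have $2jq\le kq$, so $\lambda^{(2jq)}$ lies in the unchanged lower half and the constraint $|\lambda^{(2jq)}|=Njq$ passes directly. For $\lfloor k/2\rfloor<j\le k-1$, the forced-shape decomposition gives
\[
|\lambda^{(2jq)}|=N(2jq-kq)+\bigl|\tilde{\lambda}^{(2(k-j)q)}\bigr|,
\]
so the $\mathsf{J}_N$ constraint $|\lambda^{(2jq)}|=Njq$ is equivalent, under the reindexing $j'=k-j$, to $|\tilde{\lambda}^{(2j'q)}|=Nj'q$, recovering exactly the constraints on $\tilde{\lambda}^{(2j'q)}$ in $\mathsf{I}_N$. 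When $k$ is even, the value $j=k/2$ lies in both ranges and produces the single constraint $|\lambda^{(kq)}|=|\tilde{\lambda}^{(kq)}|=Nkq/2$, which reconciles the $k-1$ constraints of $\mathsf{J}_N$ with the $2\lfloor k/2\rfloor$ constraints of $\mathsf{I}_N$. There is no serious obstacle here; the only point demanding real care is keeping track of indices and the parity of $k$ in this last translation.
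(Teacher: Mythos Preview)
Your proposal is correct and follows exactly the approach of the paper: the key observation that the interlacing constraints together with the fixed top row $\lambda^{(2kq)}=(N,\ldots,N,0,\ldots,0)$ freeze two triangles of coordinates (one of $N$'s, one of $0$'s), reducing the bijection to a relabelling of the free middle blocks, with the sum constraints transforming accordingly. Your write-up simply supplies the details (the downward induction for the forced shape, the explicit inverse, the index bookkeeping for the sum constraints and the parity of $k$) that the paper leaves to the reader and to Figure~\ref{Figure}.
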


\begin{proof}
The key observation is that the interlacing constraints and the form of the top row $\lambda^{(2kq)}=(N,\dots,N,0,\dots,0)$ fix two triangles of coordinates, one of them filled with $0$'s and the other one with $N$'s, see Figure \ref{Figure} for an illustration. This gives two discrete interlacing arrays joined at their respective top rows. The rest of the proof is essentially a relabelling of the free coordinates. Finally, it is easily seen that the sum constraints transform in the desired way. 
\end{proof}

Now we would like to understand how the weights $\psi$ transform under the bijection $\mathcal{S}$. For this it will be convenient to introduce some more notation. For any $M\in \mathbb{N}$ and $\lambda \in \mathsf{S}_+^{(M)}$ with $\lambda_1\le N$ we define:
\begin{align*}
\mathfrak{e}_N(\lambda)=(N,\lambda_1,\lambda_2,\dots,\lambda_M,0)\in \mathsf{S}_+^{(M+2)}.    
\end{align*}
By convention $\mathfrak{e}_N(\emptyset)=(N,0)$. Also, observe that if $\lambda \in \mathsf{S}^{(M)}_+$, $\nu \in \mathsf{S}_+^{(M+1)}$ with $\lambda \prec \nu$ and moreover $\lambda_1,\nu_1\le N$ then $\nu \prec \mathfrak{e}_N(\lambda)$.
\begin{lem}\label{lemmaweight}
Let $\beta>0$ and $N,k,q\in \mathbb{N}$. Then, under the bijection $\mathcal{S}$ between $\mathsf{J}_N(k;q)$ and $\mathsf{I}_N(k;q)$ we have
\begin{align*}
\psi^{\left(\frac{2}{\beta}\right)}_{\lambda^{(2kq-i)}/\lambda^{(2kq-i-1)}}=\psi^{\left(\frac{2}{\beta}\right)}_{\mathfrak{e}_N\left(\tilde{\lambda}^{(i)}\right)/\tilde{\lambda}^{(i+1)}}    , \ \ i=0,1,\dots,kq-1,
\end{align*}
where we use the convention $\tilde{\lambda}^{(0)}=\emptyset$.
\end{lem}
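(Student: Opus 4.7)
The plan is to exploit the fact that the interlacing with the top row $\lambda^{(2kq)}=(N,\dots,N,0,\dots,0)$ forces large blocks of $N$'s on the left and $0$'s on the right in every row below, and that these forced boundary values cause the overwhelming majority of factors in $\psi^{(2/\beta)}_{\lambda^{(2kq-i)}/\lambda^{(2kq-i-1)}}$ to collapse to $1$. The first step is to make the row shapes explicit: row $\lambda^{(2kq-i)}$ (of length $2kq-i$) has its first $kq-i$ entries equal to $N$ and its last $kq-i$ entries equal to $0$, with the middle block of $i$ free entries (at positions $kq-i+1,\dots,kq$) reading off as $\tilde{\lambda}^{(i)}_1,\dots,\tilde{\lambda}^{(i)}_i$ under $\mathcal{S}$; similarly, row $\lambda^{(2kq-i-1)}$ has $kq-i-1$ forced $N$'s on the left, $kq-i-1$ forced $0$'s on the right, and the free block $\tilde{\lambda}^{(i+1)}_1,\dots,\tilde{\lambda}^{(i+1)}_{i+1}$ in positions $kq-i,\dots,kq$. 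The case $i=0$ with $\tilde{\lambda}^{(0)}=\emptyset$ is degenerate but consistent.

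Second, setting $\mu=\lambda^{(2kq-i-1)}$ and $\lambda=\lambda^{(2kq-i)}$, I would scan the product indices $(a,b)$ and locate the non-trivial factors. For $b\leq kq-i-1$ both $\mu_b$ and $\lambda_{b+1}$ equal $N$; for $b\geq kq+1$ both equal $0$. In both regimes $\mu_b-\lambda_{b+1}=0$, so every Pochhammer symbol in such a factor is $(\cdot)_0=1$ and the factor collapses for every $a$. This confines the non-trivial $b$'s to $\{kq-i,\dots,kq\}$. A second simplification handles $a$: for $a\leq kq-i-1$ one has $\mu_a=\lambda_a=N$, in which case the four Pochhammer factors in the numerator and denominator of the weight formula pair up identically and the $(a,b)$-factor equals $1$. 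The only surviving contributions therefore come from pairs with both $a$ and $b$ in $\{kq-i,\dots,kq\}$ with $a\leq b$.

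Finally, I would re-index via $a'=a-(kq-i)+1$ and $b'=b-(kq-i)+1$, so $a',b'\in\{1,\dots,i+1\}$ with $a'\leq b'$ and $b-a=b'-a'$. Writing $\tilde{\mu}=\tilde{\lambda}^{(i+1)}$ and $\tilde{\lambda}=\tilde{\lambda}^{(i)}$, the bijection $\mathcal{S}$ directly gives $\mu_a=\tilde{\mu}_{a'}$ and $\mu_b=\tilde{\mu}_{b'}$. For $\lambda_a$ one checks the two cases $a'=1$ (giving $\lambda_a=N$) and $a'\in\{2,\dots,i+1\}$ (giving $\lambda_a=\tilde{\lambda}_{a'-1}$), which together are exactly the entries of $\mathfrak{e}_N(\tilde{\lambda})$; similarly $\lambda_{b+1}=\tilde{\lambda}_{b'}$ for $b'\leq i$ and $\lambda_{b+1}=0$ for $b'=i+1$, matching $(\mathfrak{e}_N(\tilde{\lambda}))_{b'+1}$. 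Substituting these identifications into each surviving $(a,b)$-factor of the LHS turns it verbatim into the $(a',b')$-factor of $\psi^{(2/\beta)}_{\mathfrak{e}_N(\tilde{\lambda})/\tilde{\mu}}$, and since $(a',b')$ ranges over precisely $1\leq a'\leq b'\leq i+1$, the two products agree.

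The only real obstacle is careful index bookkeeping; no genuinely difficult step arises. The conceptual content is that the notation $\mathfrak{e}_N$ is designed precisely to package the boundary $N$ on the left and $0$ on the right of the free block in $\lambda^{(2kq-i)}$ into a single signature of length $i+2$, so that the weight on an $\mathsf{I}_N(k;q)$-array reads off from the weight on the associated $\mathsf{J}_N(k;q)$-array with no extra combinatorial work.
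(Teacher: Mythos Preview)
Your proof is correct and follows exactly the approach the paper has in mind: the paper's own proof is the one-line remark that the $(a,b)$-factor in $\psi_{\lambda/\mu}^{(\delta)}$ equals $1$ whenever $\mu_a=\lambda_a$ or $\mu_b=\lambda_{b+1}$, and you have carried out precisely this direct computation in full detail, including the re-indexing that identifies the surviving block with $\psi^{(2/\beta)}_{\mathfrak{e}_N(\tilde{\lambda}^{(i)})/\tilde{\lambda}^{(i+1)}}$.
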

\begin{proof}
Direct computation by noting that when $\mu_i=\lambda_i$ or $\mu_j=\lambda_{j+1}$ then the factor corresponding to the indices $(i,j)$ in the product definition (\ref{weightdef}) of $\psi_{\lambda/\mu}^{(\delta)}$ is identically 1.
\end{proof}

\begin{prop}\label{CombinatorialRepMoM}
Let $\beta>0$ and $N,k,q\in \mathbb{N}$. Then, we have:
\begin{align}
 \textnormal{MoM}_N^{(\beta)}(k;q)=\sum_{\mathsf{\Lambda}\in\mathsf{I}_N(k;q)}\psi_{\mathfrak{e}_N(\emptyset)/\tilde{\lambda}^{(1)}}^{\left(\frac{2}{\beta}\right)}\psi_{\mathfrak{e}_N(\tilde{\lambda}^{(1)})/\tilde{\lambda}^{(2)}}^{\left(\frac{2}{\beta}\right)}\cdots\psi_{\mathfrak{e}_N(\tilde{\lambda}^{(kq-1)})/\lambda^{(kq)}}^{\left(\frac{2}{\beta}\right)}\psi_{\lambda^{(kq)}/\lambda^{(kq-1)}}^{\left(\frac{2}{\beta}\right)}\cdots  \psi_{\lambda^{(2)}/\lambda^{(1)}}^{\left(\frac{2}{\beta}\right)}.       
\end{align}
\end{prop}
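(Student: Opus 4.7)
My plan is to combine the three preceding results in a straightforward manner. Starting from Proposition~\ref{PreliminaryRepresentationProp}, the moments of moments already admit a combinatorial representation as a sum over the index set $\mathsf{J}_N(k;q)$, with summand equal to a product of $\psi^{(2/\beta)}$-weights along a single long interlacing chain of length $2kq$ whose top row is the fixed signature $(N,\dots,N,0,\dots,0)$. The goal is simply to re-express this as a sum over $\mathsf{I}_N(k;q)$, which consists of two interlacing chains joined at a common top row.

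The first step is to push the sum through the bijection $\mathcal{S}:\mathsf{J}_N(k;q)\to\mathsf{I}_N(k;q)$ from Lemma~\ref{lemmabijection}. Under $\mathcal{S}$ the first half of the chain $\lambda^{(1)}\prec\cdots\prec\lambda^{(kq)}$ in $\mathsf{J}_N(k;q)$ is left intact and becomes the ascending leg of the V-shape in $\mathsf{I}_N(k;q)$, whereas the second half $\lambda^{(kq+1)}\prec\cdots\prec\lambda^{(2kq)}=(N,\dots,N,0,\dots,0)$ is unfolded: the interlacing with the fixed top row pins two triangles of coordinates (one filled with $0$'s, one filled with $N$'s), and after discarding these forced entries the remaining non-trivial coordinates reorganise into a descending chain $\tilde{\lambda}^{(kq)}\succ\cdots\succ\tilde{\lambda}^{(1)}$. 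The sum constraints transfer directly: the constraint $|\lambda^{(2jq)}|=Njq$ for $j\le\lfloor k/2\rfloor$ is unchanged, while the analogous constraints for $j>k/2$ on the J-side become exactly the $|\tilde{\lambda}^{(2jq)}|=Njq$ constraints on the I-side after the relabelling.

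The second step is to track what happens to each of the $2kq-1$ weight factors under this bijection. The $kq-1$ weights $\psi^{(2/\beta)}_{\lambda^{(j+1)}/\lambda^{(j)}}$ for $j=1,\dots,kq-1$ on the ascending leg are unchanged, since $\mathcal{S}$ acts as the identity on these coordinates. The remaining $kq$ weights $\psi^{(2/\beta)}_{\lambda^{(2kq-i)}/\lambda^{(2kq-i-1)}}$ for $i=0,1,\dots,kq-1$, which sit on the descending half of the J-chain, are precisely the content of Lemma~\ref{lemmaweight}: each such weight equals $\psi^{(2/\beta)}_{\mathfrak{e}_N(\tilde{\lambda}^{(i)})/\tilde{\lambda}^{(i+1)}}$, with the convention $\tilde{\lambda}^{(0)}=\emptyset$. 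Assembling these two pieces produces exactly the product claimed in the proposition, and summing over $\mathsf{I}_N(k;q)$ via $\mathcal{S}$ finishes the argument.

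The main thing to watch for is the bookkeeping of indices at the join: when $i=kq-1$, the weight $\psi^{(2/\beta)}_{\mathfrak{e}_N(\tilde{\lambda}^{(kq-1)})/\lambda^{(kq)}}$ should be produced, which uses the identification $\tilde{\lambda}^{(kq)}=\lambda^{(kq)}$ built into the definition of $\mathsf{I}_N(k;q)$. Beyond this routine indexing, no new combinatorial input is required: the substantive content has been packaged into Lemmas~\ref{lemmabijection} and~\ref{lemmaweight}, so Proposition~\ref{CombinatorialRepMoM} will follow as a direct reformulation of Proposition~\ref{PreliminaryRepresentationProp}.
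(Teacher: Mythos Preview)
Your proposal is correct and follows exactly the same approach as the paper: the paper's proof is the single sentence ``This follows by combining Proposition~\ref{PreliminaryRepresentationProp} along with Lemma~\ref{lemmabijection} and Lemma~\ref{lemmaweight},'' and your write-up is simply a careful unpacking of that combination, including the index bookkeeping at the join $\tilde{\lambda}^{(kq)}=\lambda^{(kq)}$.
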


\begin{proof}
This follows by combining Proposition \ref{PreliminaryRepresentationProp} along with Lemma \ref{lemmabijection} and Lemma \ref{lemmaweight}.
\end{proof}

\section{The large N limit}

\subsection{Proof of Proposition \ref{PropAsymptotics1}}\label{PropAsympt1Prop}
We begin with some preliminaries. We need to introduce the continuous analogues of the notions and objects we saw in the discrete setting of Section \ref{CombinatoricsSection}. For any $M\in \mathbb{N}$, we define the Weyl chamber with non-negative coordinates:
\begin{align*}
\mathsf{W}_+^{(M)}=\big\{(x_1,\dots,x_M)\in \mathbb{R}_+^M:x_1\ge x_2 \ge \dots \ge x_M \big\}.   
\end{align*}
For $x\in \mathsf{W}_+^{(M)}$ we write $|x|=\sum_{i=1}^M x_i$. We say that $y\in \mathsf{W}_+^{(M)}$ and $x\in \mathsf{W}_+^{(M+1)}$ interlace and still denote this by $y \prec x$ if the same inequalities (\ref{interlacingineq}) as in the discrete case hold. Also, similarly to the discrete setting we call a sequence of configurations which interlace a continuous interlacing array. We now define the continuous analogue of $\mathsf{I}_N(k;q)$.

\begin{defn}\label{DefContSet} Let $k,q \in \mathbb{N}$. We define the set
$\mathsf{I}_c(k;q)$ as follows. It consists of tuples $\mathsf{X}=\left(x^{(1)},x^{(2)},\dots,x^{(kq-1)},x^{(kq)}=\tilde{x}^{(kq)},\tilde{x}^{(kq-1)},\dots, \tilde{x}^{(2)},\tilde{x}^{(1)}\right)$ so that: $x^{(i)},\tilde{x}^{(i)}\in \mathsf{W}_+^{(i)}$, 
\begin{align*}
x^{(1)}\prec x^{(2)} \prec x^{(3)}\prec \dots \prec x^{(kq)}=\tilde{x}^{(kq)}\succ \dots \succ \tilde{x}^{(3)} \succ \tilde{x}^{(2)} \succ \tilde{x}^{(1)},
\end{align*}
$x_i^{(j)},\tilde{x}_i^{(j)}\in [0,1]$ and finally the following $(k-1)$ sum constraints are satisfied:
\begin{align*}
 \big|x^{(2jq)}\big|&=\sum_{i=1}^{2jq}x_i^{(2jq)}=jq, \ \ j=1,\dots,\left\lfloor \frac{k}{2}\right\rfloor,\\
  \big|\tilde{x}^{(2jq)}\big|&=\sum_{i=1}^{2jq}\tilde{x}_i^{(2jq)}=jq, \ \ j=1,\dots,\left\lfloor \frac{k}{2}\right\rfloor .
\end{align*}
\end{defn}
Observe that, as in the discrete setting, $\mathsf{I}_c(k;q)$ has $(kq)^2-(k-1)$ free (non-fixed) coordinates. We also define the continuous counterpart of $\mathfrak{e}_N$. For any $M\in \mathbb{N}$ and $x \in \mathsf{W}_+^{(M)}$ with $x_1\le 1$ we define:
\begin{align*}
\mathfrak{e}_c(x)=(1,x_1,x_2,\dots,x_M,0)\in \mathsf{W}_+^{(M+2)}.    
\end{align*}
By convention $\mathfrak{e}_c(\emptyset)=(1,0)$. As in the discrete case, observe that if $y \in \mathsf{W}^{(M)}_+$, $x \in \mathsf{W}_+^{(M+1)}$ with $y \prec x$ and moreover $x_1,y_1\le 1$ then $x \prec \mathfrak{e}_c(y)$.

For $y\in \mathsf{W}_+^{(M)}, x\in \mathsf{W}_+^{(M+1)}$ which interlace we define the following non-negative weight, which is the continuous analogue of $\psi_{\lambda/\mu}^{(\delta)}$:
\begin{align*}
\phi_{M,M+1}^{(\delta)}(y,x)&=\frac{1}{\Gamma(\delta)^M}\prod_{1\le i<j \le M}(y_i-x_{j+1})^{\delta-1}(y_i-y_j)^{1-\delta}(x_i-y_j)^{\delta-1}(x_i-x_{j+1})^{1-\delta}\\
&\times\prod_{i=1}^M(y_i-x_{i+1})^{\delta-1}(x_i-y_i)^{\delta-1}(x_i-x_{i+1})^{1-\delta}\\
&=\frac{1}{\Gamma(\delta)^M}\prod_{i=1}^{M+1}\prod_{j=1}^M|x_i-y_j|^{\delta-1}\prod_{1\le i<j\le M+1}(x_i-x_j)^{1-\delta}\prod_{1\le i<j \le M}(y_i-y_j)^{1-\delta}.
\end{align*}
Finally, we define the constant $\mathfrak{c}^{(\beta)}(k;q)$, which turns out to be the leading order coefficient in the asymptotics of $\textnormal{MoM}_N^{(\beta)}(k;q)$, by the following integral expression:
\begin{align}\label{ConstIntegralRep}
\mathfrak{c}^{(\beta)}(k;q)=\int_{\mathsf{X}\in \mathsf{I}_c(k;q)} \prod_{M=1}^{kq-1}\phi_{M,M+1}^{\left(\frac{2}{\beta}\right)}\left(x^{(M)},x^{(M+1)}\right)\prod_{M=0}^{kq-1}\phi_{M+1,M+2}^{\left(\frac{2}{\beta}\right)}\left(\tilde{x}^{(M+1)},\mathfrak{e}_c\left(\tilde{x}^{(M)}\right)\right) d\mathsf{X},
\end{align}
where we have used the convention $\tilde{x}^{(0)}=\emptyset$. It is important to note, as alluded to in the introduction and proven below, that when $k\ge 2$ and $\beta$ is large enough the integral defining $\mathfrak{c}^{(\beta)}(k;q)$ is infinite. Nevertheless, since the integrand is non-negative, it is unambiguously defined for all $\beta>0$. We also recall the definition of the set $\mathcal{A}(k;q)$ given by, for any fixed $k,q\in \mathbb{N}$:
\begin{align*}
 \mathcal{A}(k;q)=\big\{\beta>0: \mathfrak{c}^{(\beta)}(k;q)<\infty \big\}.
\end{align*}

We are now ready to prove Proposition \ref{PropAsymptotics1}.
\begin{proof}[Proof of Proposition \ref{PropAsymptotics1}] The idea is simple, namely a discrete to continuous scaling limit in going from a Riemann sum to an integral.

Using Proposition \ref{CombinatorialRepMoM} we can write:
\begin{align*}
 \textnormal{MoM}_N^{(\beta)}(k;q)&=N^{\frac{2}{\beta}(kq)^2-(k-1)}\frac{1}{N^{(kq)^2-(k-1)}}N^{-\left(\frac{2}{\beta}-1\right)(kq)^2}   \sum_{\mathsf{\Lambda}\in\mathsf{I}_N(k;q)}\psi_{\mathfrak{e}_N(\emptyset)/\tilde{\lambda}^{(1)}}^{\left(\frac{2}{\beta}\right)}\psi_{\mathfrak{e}_N(\tilde{\lambda}^{(1)})/\tilde{\lambda}^{(2)}}^{\left(\frac{2}{\beta}\right)}\times\cdots\\
 &\cdots \times \psi_{\mathfrak{e}_N(\tilde{\lambda}^{(kq-1)})/\lambda^{(kq)}}^{\left(\frac{2}{\beta}\right)}\psi_{\lambda^{(kq)}/\lambda^{(kq-1)}}^{\left(\frac{2}{\beta}\right)}\cdots  \psi_{\lambda^{(2)}/\lambda^{(1)}}^{\left(\frac{2}{\beta}\right)}\\
 &=N^{\frac{2}{\beta}(kq)^2-(k-1)}\frac{1}{N^{(kq)^2-(k-1)}}\sum_{\mathsf{\Lambda}\in\mathsf{I}_N(k;q)}N^{-\left(\frac{2}{\beta}-1\right)}\psi_{\mathfrak{e}_N(\emptyset)/\tilde{\lambda}^{(1)}}^{\left(\frac{2}{\beta}\right)}N^{-2\left(\frac{2}{\beta}-1\right)}\psi_{\mathfrak{e}_N(\tilde{\lambda}^{(1)})/\tilde{\lambda}^{(2)}}^{\left(\frac{2}{\beta}\right)}\times\cdots\\
 &\cdots \times N^{-kq\left(\frac{2}{\beta}-1\right)}\psi_{\mathfrak{e}_N(\tilde{\lambda}^{(kq-1)})/\lambda^{(kq)}}^{\left(\frac{2}{\beta}\right)}N^{-(kq-1)\left(\frac{2}{\beta}-1\right)}\psi_{\lambda^{(kq)}/\lambda^{(kq-1)}}^{\left(\frac{2}{\beta}\right)}\cdots  N^{-\left(\frac{2}{\beta}-1\right)}\psi_{\lambda^{(2)}/\lambda^{(1)}}^{\left(\frac{2}{\beta}\right)}.
\end{align*}
Now using $(t)_m=\frac{\Gamma(t+m)}{\Gamma(t)}$ we can rewrite the weight $\psi_{\lambda/\mu}^{(\delta)}$, for $\mu\in\mathsf{S}_+^{(M)}, \lambda \in \mathsf{S}_+^{(M+1)}$ that interlace, in the following suggestive way:
\begin{align*}
\psi_{\lambda/\mu}^{(\delta)}=\frac{1}{\Gamma(\delta)^M}  \prod_{1\le i<j\le M}\frac{\Gamma(\mu_i-\lambda_{j+1}+\delta(j-i)+\delta)\Gamma(\mu_i-\mu_j+\delta(j-i)+1)}{\Gamma(\mu_i-\lambda_{j+1}+\delta(j-i)+1)\Gamma(\mu_i-\mu_j+\delta(j-i)+\delta)}\\
\times \frac{\Gamma(\lambda_i-\lambda_{j+1}+\delta(j-i)+1)\Gamma(\lambda_i-\mu_j+\delta(j-i)+\delta)}{\Gamma(\lambda_i-\lambda_{j+1}+\delta(j-i)+\delta)\Gamma(\lambda_i-\mu_j+\delta(j-i)+1)}
\\\times \prod_{i=1}^M\frac{\Gamma(\mu_i-\lambda_{i+1}+\delta)\Gamma(\lambda_i-\lambda_{i+1}+1)\Gamma(\lambda_i-\mu_i+\delta)}{\Gamma(\mu_i-\lambda_{i+1}+1)\Gamma(\lambda_i-\lambda_{i+1}+\delta)\Gamma(\lambda_i-\mu_i+1)}.
\end{align*}
Then, making use\footnote{To be more precise, we can apply this approximation over the set $\mathsf{I}^\circ_N(k;q)$, defined as $\mathsf{I}_N(k;q)$ but with strict inequalities. Then it is easy to see that the contribution from the sum over $\mathsf{I}_N(k;q)\backslash\mathsf{I}^\circ_N(k;q)$ is of lower order in $N$ (we have at least one less free variable).} of the standard approximation:
\begin{align*}
\frac{\Gamma(z+c)}{\Gamma(z+d)}=z^{c-d}+\mathcal{O}_{c,d}\left(z^{c-d-1}\right),     
\end{align*}
we obtain the following:
\begin{align*}
 \textnormal{MoM}_N^{(\beta)}(k,q)\sim  N^{\frac{2}{\beta}(kq)^2-(k-1)}\frac{1}{N^{(kq)^2-(k-1)}}\sum_{\mathsf{\Lambda}\in\mathsf{I}_N(k;q)} \prod_{M=1}^{kq-1}\phi_{M,M+1}^{\left(\frac{2}{\beta}\right)}\left(\frac{\lambda^{(M)}}{N},\frac{\lambda^{(M+1)}}{N}\right)\\\prod_{M=0}^{kq-1}\phi_{M+1,M+2}^{\left(\frac{2}{\beta}\right)}\left(\frac{\tilde{\lambda}^{(M)}}{N},\frac{\mathfrak{e}_N\left(\tilde{\lambda}^{(M)}\right)}{N}\right).
\end{align*}
Then, using the Riemann sum approximation of an integral, which by assumption is finite since $\beta\in \mathcal{A}(k;q)$, we finally obtain:
\begin{align*}
  \textnormal{MoM}_N^{(\beta)}(k;q)&\sim  N^{\frac{2}{\beta}(kq)^2-(k-1)}\int_{\mathsf{X}\in \mathsf{I}_c(k;q)} \prod_{M=1}^{kq-1}\phi_{M,M+1}^{\left(\frac{2}{\beta}\right)}\left(x^{(M)},x^{(M+1)}\right)\prod_{M=0}^{kq-1}\phi_{M+1,M+2}^{\left(\frac{2}{\beta}\right)}\left(\tilde{x}^{(M+1)},\mathfrak{e}_c\left(\tilde{x}^{(M)}\right)\right) d\mathsf{X}\\
  &=N^{\frac{2}{\beta}(kq)^2-(k-1)}\mathfrak{c}^{(\beta)}(k;q).
\end{align*}
 To conclude we observe that $\mathfrak{c}^{(\beta)}(k;q)$ is strictly positive since $\mathsf{I}_c(k;q)$ has non-empty interior and the integrand is continuous and strictly positive when restricted there.
\end{proof}

\subsection{Proof of Proposition \ref{PropAsymptotics2}}\label{PropAsympt2Sec}

Before proving Proposition \ref{PropAsymptotics2} we briefly comment on why the integral defining $\mathfrak{c}^{(\beta)}(k;q)$ could be infinite when $k\ge 2$ and $\beta$ is large enough while it is always finite for $k=1$. Observe that while the integrands coming in the definition of $\mathfrak{c}^{(\beta)}(k;q)$ and $\mathfrak{c}^{(\beta)}(1;kq)$ are identical the corresponding integrals are over $\mathsf{I}_c(k;q)$ and $\mathsf{I}_c(1;kq)$ which are $(kq)^2-(k-1)$ and $(kq)^2$ dimensional respectively. Thus, it could be that for a fixed $\beta>0$ a singularity of the integrand of a certain order\footnote{Here we say that a function has a singularity of order $\mathfrak{o}$, where $\mathfrak{o}$ is a positive real number, at a point $\mathsf{x}_0$ if it blows up like $r^{-\mathfrak{o}}$ as $r\to 0$, where $r$ is the distance from $\mathsf{x}_0$. It is most convenient to think of a function in terms of spherical coordinates around $\mathsf{x}_0$, in which case and in the particular setting of this paper computing the order of the singularity at $\mathsf{x}_0$ boils down to a combinatorial power counting argument.} is integrable over $\mathsf{I}_c(1;kq)$, in fact this holds for all $\beta>0$, while it is not integrable over $\mathsf{I}_c(k;q)$ for $k\ge 2$ (this can only happen when $\beta$ is large enough since when $\beta\le 2$ the integrand is uniformly bounded as we show below).

We now prove a number of results which combined give Proposition \ref{PropAsymptotics2}. We begin with the following lemma on the dependence of the weight $\phi_{M,M+1}^{(\delta)}(y,x)$ on $\delta$.
\begin{lem}\label{WeightBoundedness}
Let $M\in \mathbb{N}$ and $\delta'\le \delta$. Then, for any $y\in \mathsf{W}_+^{(M)}\cap[0,1]^{M}, x\in \mathsf{W}_+^{(M+1)}\cap [0,1]^{M+1}$ which interlace we have:
\begin{align*}
    \Gamma(\delta)^M\phi_{M,M+1}^{(\delta)}(y,x)\le \Gamma(\delta')^M\phi_{M,M+1}^{(\delta')}(y,x).
\end{align*}
In particular, for $\delta \ge 1$ since $\phi^{(1)}_{M,M+1}(y,x)\equiv 1$ we have:
\begin{align*}
    \phi_{M,M+1}^{(\delta)}(y,x)\le \frac{1}{\Gamma(\delta)^M}.
\end{align*}
\end{lem}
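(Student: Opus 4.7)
The plan is to reduce the statement to a single pointwise inequality by exploiting the explicit product form of $\phi^{(\delta)}_{M,M+1}$. First, using the second expression in the Lemma's statement for $\phi^{(\delta)}_{M,M+1}(y,x)$, one directly obtains the identity
\[
\Gamma(\delta)^M\, \phi^{(\delta)}_{M,M+1}(y,x) \;=\; R(y,x)^{\delta-1},\qquad R(y,x) \;:=\; \frac{\prod_{i=1}^{M+1}\prod_{j=1}^M|x_i - y_j|}{\prod_{1\le i<j\le M+1}(x_i - x_j)\prod_{1\le i<j\le M}(y_i - y_j)} \;\ge\; 0.
\]
Since $\delta\mapsto \delta-1$ is strictly increasing, the non-increasing monotonicity of $\delta \mapsto \Gamma(\delta)^M \phi^{(\delta)}_{M,M+1}(y,x)$ asserted by the Lemma is therefore equivalent to the bound $R(y,x)\le 1$, i.e.\
\[
\prod_{i=1}^{M+1}\prod_{j=1}^M |x_i - y_j| \;\le\; \prod_{1\le i<j\le M+1}(x_i - x_j)\prod_{1\le i<j\le M}(y_i - y_j). \qquad (\star)
\]

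Next, I would prove $(\star)$ by constructing an explicit matching of factors. Split the $M(M+1)$ factors on the left into three groups and bound each using the interlacing $x_1\ge y_1\ge x_2\ge \cdots \ge y_M \ge x_{M+1}$ together with the $[0,1]$ hypothesis: for $i\le j$, $|x_i - y_j|=x_i - y_j\le x_i - x_{j+1}$ (since $y_j\ge x_{j+1}$); for $j<i\le M$, $|x_i - y_j|=y_j - x_i\le y_j - y_i$ (since $x_i\ge y_i$); and for $i=M+1$, $|x_{M+1} - y_j|\le 1$. The map $(i,j)\mapsto(i,j+1)$ is a bijection from $\{(i,j):1\le i\le j\le M\}$ onto the $x$-difference index set $\{(a,b):1\le a<b\le M+1\}$, and $(i,j)\mapsto(j,i)$ is a bijection from $\{(i,j):1\le j<i\le M\}$ onto the $y$-difference index set $\{(a,b):1\le a<b\le M\}$. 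The third group contains exactly $M(M+1)-M^2=M$ factors, each bounded by $1$. Multiplying the three groups of bounds yields $(\star)$.

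Finally, the ``In particular'' statement follows immediately by specialising to $\delta' = 1$: since $\phi^{(1)}_{M,M+1}\equiv 1$ and $\Gamma(1)=1$, the Lemma's inequality gives $\Gamma(\delta)^M \phi^{(\delta)}_{M,M+1}(y,x)\le 1$, i.e.\ $\phi^{(\delta)}_{M,M+1}(y,x)\le \Gamma(\delta)^{-M}$.

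The only real content is finding the correct matching of factors used to prove $(\star)$. The $[0,1]$ hypothesis is essential precisely because it is what is used to trivially dispose of the $M$ surplus LHS factors (those with $i=M+1$); everything else is straightforward algebra and bookkeeping.
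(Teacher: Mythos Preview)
Your proof is correct and follows essentially the same approach as the paper's: both reduce to showing the base $R(y,x)\le 1$ and then establish this by matching each numerator factor $|x_i-y_j|$ with either an $x$-difference, a $y$-difference, or the bound $1$, using exactly the same three interlacing inequalities $y_j\ge x_{j+1}$, $x_i\ge y_i$, and $0\le y_j - x_{M+1}\le 1$. Your explicit bijection presentation is somewhat cleaner than the paper's splitting into terms (I) and (II), but the underlying factor-matching is identical.
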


\begin{proof} Observe that we can write:
\begin{align*}
   \Gamma(\delta)^M\phi_{M,M+1}^{(\delta)}(y,x)=\left[\phi_{M,M+1}(y,x)\right]^{\delta-1},  
\end{align*}
where $\phi_{M,M+1}$ is given by:
\begin{align*}
  \phi_{M,M+1}(y,x)&=  \prod_{1\le i<j \le M}(y_i-x_{j+1})(y_i-y_j)^{-1}(x_i-y_j)(x_i-x_{j+1})^{-1}\\&\times \prod_{i=1}^M(y_i-x_{i+1})(x_i-y_i)(x_i-x_{i+1})^{-1}.
\end{align*}
We show that, for any $y\in \mathsf{W}_+^{(M)}\cap[0,1]^{M}, x\in \mathsf{W}_+^{(M+1)}\cap [0,1]^{M+1}$ which interlace, $\phi_{M,M+1}(y,x)\le 1$ which suffices to establish the lemma.
We have that:
\begin{align*}
      \phi_{M,M+1}(y,x)&= \prod_{1\le i< j\le M}(y_i-x_{j+1})(y_i-y_j)^{-1} \left( \frac{x_i-y_j}{x_i-x_{j+1}}\right) \prod_{i=1}^M(y_i-x_{i+1})\left( \frac{x_i-y_i}{x_i-x_{i+1}}\right)\\ 
      &\le \prod_{1\le i< j\le M}(y_i-x_{j+1})(y_i-y_j)^{-1}\prod_{i=1}^M(y_i-x_{i+1}),
\end{align*}
since $x_{j+1}\le y_j$ because of the interlacing. We rewrite the last line as follows:
\begin{align*}
\underbrace{\prod_{1\le i<j\le M}(y_i-x_{j+1})\prod_{\substack{1\le i<j \le M \\ j\neq i+1}}(y_i-y_j)^{-1}}_\textrm{(I)}\underbrace{\prod_{i=1}^{M-1}(y_i-y_{i+1})^{-1} \prod_{i=1}^M(y_i-x_{i+1})}_\textrm{(II)}.
\end{align*}
The second term (II) can be bounded as follows:
\begin{align*}
 \textnormal{(II)}=\prod_{i=1}^{M-1}(y_i-y_{i+1})^{-1} \prod_{i=1}^M(y_i-x_{i+1})= (y_M-x_{M+1}) \prod_{i=1}^{M-1}\left(\frac{y_i-x_{i+1}}{y_i-y_{i+1}}\right) \le 1,
\end{align*}
since $y_{i+1}\le x_{i+1}$ and moreover $y_M\le 1$ and $x_{M+1}\ge 0$. While the first term (I) satisfies:
\begin{align*}
 \textnormal{(I)}&=\prod_{1\le i<j\le M}(y_i-x_{j+1})\prod_{\substack{1\le i<j \le M \\ j\neq i+1}}(y_i-y_j)^{-1} =\prod_{i=1}^M\left[\prod_{j=i+1}^M (y_i-x_{j+1})^{} \prod_{j=i+2}^M(y_i-y_j)^{-1} \right]\\
 &=\prod_{i=1}^M\left[\left(\frac{y_i-x_{i+2}}{y_i-y_{i+2}}\right) \cdots \left( \frac{y_i-x_M}{y_i-y_M}\right) (y_i-x_{M+1})\right] \le 1,
\end{align*}
again since $y_j\le x_j$ and also $y_i\le 1$ and $x_{M+1}\ge 0$. The conclusion follows.
\end{proof} 

The lemma above gives us the following corollary on the dependence on $\beta$ of the leading order coefficient in the asymptotics.

\begin{cor}\label{CorComparison}
Let $k,q\in \mathbb{N}$ and $\beta \le \beta'$. Then, we have:
\begin{align*}
\mathfrak{c}^{(\beta)}(k;q)\le   \left[\frac{\Gamma\left(\frac{2}{\beta'}\right)}{\Gamma\left(\frac{2}{\beta}\right)}\right]^{(kq)^2}\mathfrak{c}^{(\beta')}(k;q).
\end{align*}
In particular, if $\beta \le 2$ then:
\begin{align*}
\mathfrak{c}^{(\beta)}(k;q)\le \frac{1}{\Gamma\left(\frac{2}{\beta}\right)^{(kq)^2}}\mathfrak{c}^{(2)}(k;q)=\frac{1}{\Gamma\left(\frac{2}{\beta}\right)^{(kq)^2}} \textnormal{volume}\left(\mathsf{I}_c(k;q)\right)<\infty.
\end{align*}
\end{cor}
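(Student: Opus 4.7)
The plan is to apply Lemma \ref{WeightBoundedness} factorwise to the integrand in the definition (\ref{ConstIntegralRep}) of $\mathfrak{c}^{(\beta)}(k;q)$, multiply the resulting bounds, and then integrate. Since $\beta\le \beta'$ is equivalent to $\delta := \frac{2}{\beta}\ge \frac{2}{\beta'}=:\delta'$, Lemma \ref{WeightBoundedness} applied with these choices of $\delta, \delta'$ gives, for every interlacing pair $y\in \mathsf{W}_+^{(M)}\cap[0,1]^M$, $x\in \mathsf{W}_+^{(M+1)}\cap[0,1]^{M+1}$,
\begin{align*}
\phi_{M,M+1}^{(2/\beta)}(y,x)\le \left[\frac{\Gamma(2/\beta')}{\Gamma(2/\beta)}\right]^{M}\phi_{M,M+1}^{(2/\beta')}(y,x).
\end{align*}

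Next I would apply this bound to each of the factors appearing in (\ref{ConstIntegralRep}), namely to $\phi_{M,M+1}^{(2/\beta)}(x^{(M)},x^{(M+1)})$ for $M=1,\dots,kq-1$ and to $\phi_{M+1,M+2}^{(2/\beta)}(\tilde{x}^{(M+1)},\mathfrak{e}_c(\tilde{x}^{(M)}))$ for $M=0,\dots,kq-1$. The total exponent of $\Gamma(2/\beta')/\Gamma(2/\beta)$ is then
\begin{align*}
\sum_{M=1}^{kq-1} M \;+\; \sum_{M=0}^{kq-1}(M+1) \;=\; \frac{kq(kq-1)}{2}+\frac{kq(kq+1)}{2} \;=\; (kq)^2,
\end{align*}
matching the power claimed in the corollary. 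Integrating the resulting pointwise bound on integrands over $\mathsf{I}_c(k;q)$ yields the first assertion
\begin{align*}
\mathfrak{c}^{(\beta)}(k;q)\le \left[\frac{\Gamma(2/\beta')}{\Gamma(2/\beta)}\right]^{(kq)^2}\mathfrak{c}^{(\beta')}(k;q).
\end{align*}

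For the second assertion I would specialise to $\beta'=2$, so $\delta'=1$ and $\Gamma(\delta')=1$. Inspecting the product form of $\phi_{M,M+1}^{(\delta)}$, all exponents $\delta-1$ and $1-\delta$ vanish when $\delta=1$, so $\phi_{M,M+1}^{(1)}\equiv 1$. Consequently the integrand defining $\mathfrak{c}^{(2)}(k;q)$ is identically $1$ on $\mathsf{I}_c(k;q)$, giving $\mathfrak{c}^{(2)}(k;q)=\textnormal{volume}(\mathsf{I}_c(k;q))$. Since $\mathsf{I}_c(k;q)\subset[0,1]^{(kq)^2-(k-1)}$ is a bounded (in fact compact) subset of Euclidean space, its volume is finite, completing the proof.

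There is no real obstacle here; the only care needed is in the bookkeeping of the exponent counting and in verifying that the specialisation $\delta=1$ collapses $\phi_{M,M+1}^{(\delta)}$ to the constant $1$, both of which are immediate from the definitions stated earlier in the paper.
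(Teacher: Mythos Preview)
Your proposal is correct and follows exactly the route the paper intends: the corollary is stated immediately after Lemma~\ref{WeightBoundedness} with no further proof because it is obtained precisely by applying the lemma factorwise to the integrand in (\ref{ConstIntegralRep}), multiplying the bounds (your exponent count $\sum_{M=1}^{kq-1}M+\sum_{M=0}^{kq-1}(M+1)=(kq)^2$ is the right bookkeeping), and integrating. The specialisation $\beta'=2$ giving $\phi_{M,M+1}^{(1)}\equiv 1$ and hence $\mathfrak{c}^{(2)}(k;q)=\textnormal{volume}(\mathsf{I}_c(k;q))$ is also exactly what the paper uses.
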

\begin{rmk}
Observe that for $k=1$ using the explicit formula (\ref{k=1explicit}) for $\mathfrak{c}^{(\beta)}(1;q)$, see also Lemma \ref{ExplicitExpressions}, the corollary above is equivalent to:
\begin{align}\label{Non-Increasing}
t\mapsto \Gamma(t)^{q^2}\prod_{j=1}^q\frac{\Gamma\left(tj\right)}{\Gamma\left(t\left(q+j\right)\right)} \ \ \textnormal{ is non-increasing on } (0,\infty).
\end{align}
This elementary statement can alternatively be proven directly as follows. It suffices to show that the logarithm of (\ref{Non-Increasing}) is non-increasing on $(0,\infty)$. The logarithmic derivative of (\ref{Non-Increasing}) is equal to, using the notation $\mathsf{D}(x)=\frac{d}{dx}\log\Gamma(x)$:
\begin{align*}
  q^2\mathsf{D}(t)+\sum_{j=1}^q \left[j\mathsf{D}\left(jt\right)-(q+j)\mathsf{D}\left(t(q+j)\right)\right] &\le   q^2\mathsf{D}(t)+\sum_{j=1}^q \left[j\mathsf{D}\left(jt\right)-(q+j)\mathsf{D}\left(jt\right)\right]\\
  &=q^2\mathsf{D}(t)-q\sum_{j=1}^q\mathsf{D}(jt)\le q^2\mathsf{D}(t)-q^2\mathsf{D}(t)=0,
\end{align*}
where we have used the well-known fact that $\mathsf{D}(\cdot)$ is non-decreasing on $(0,\infty)$. Then, (\ref{Non-Increasing}) follows.
\end{rmk}
In the special cases $k=1$ and $k=2$ it is possible to give simpler integral expressions for $\mathfrak{c}^{(\beta)}(1;q)$ and $\mathfrak{c}^{(\beta)}(2;q)$ by computing the intermediate integrals over the interlacing arrays, except for the row at which the arrays are joined at. In the case of $\mathfrak{c}^{(\beta)}(1;q)$ the resulting integral, which is a special case of the Selberg integral, see for example \cite{Forrester}, can be computed explicitly and gives (\ref{k=1explicit}).

\begin{lem}\label{ExplicitExpressions} Let $q\in \mathbb{N}$ and $\beta>0$. Then, we have the following expressions:
\begin{align}
   \mathfrak{c}^{(\beta)}(1;q) &=\prod_{M=1}^{q}\frac{\Gamma\left(\frac{2}{\beta}\right)}{\Gamma\left(M\frac{2}{\beta}\right)^2}\int_{\mathsf{W}_+^{(q)}\cap[0,1]^{q}}\prod_{1\le i<j \le q}(x_i-x_j)^{\frac{4}{\beta}}\prod_{i=1}^{q}\left[x_i\left(1-x_i\right)\right]^{\frac{2}{\beta}-1}dx\\&=\prod_{i=1}^q\frac{\Gamma\left(\frac{2}{\beta}i\right)}{\Gamma\left(\frac{2}{\beta}(q+i)\right)}\nonumber,\\
    \mathfrak{c}^{(\beta)}(2;q)&=\prod_{M=1}^{2q}\frac{\Gamma\left(\frac{2}{\beta}\right)}{\Gamma\left(M\frac{2}{\beta}\right)^2}\int_{\mathsf{W}_+^{(2q)}\cap[0,1]^{2q},\  \sum_{i=1}^{2q}x_i=q}\prod_{1\le i<j \le 2q}(x_i-x_j)^{\frac{4}{\beta}}\prod_{i=1}^{2q}\left[x_i\left(1-x_i\right)\right]^{\frac{2}{\beta}-1}dx.\label{Integralk=2}
\end{align}
\end{lem}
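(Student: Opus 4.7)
The plan is to integrate out all intermediate rows of the two interlacing arrays appearing in the defining integral (\ref{ConstIntegralRep}) of $\mathfrak{c}^{(\beta)}(k;q)$, starting from the bottom of each side and working up to the common top row $x^{(kq)}=\tilde{x}^{(kq)}$. This should reduce everything to a Selberg-type integral over the top row with $kq$ coordinates. The main tool I would use throughout is the Dixon--Anderson integral, specialized to all parameters equal to $\delta := 2/\beta$:
\begin{equation*}
\int_{y\prec x}\prod_{1\le i<j\le M}(y_i-y_j)\prod_{i=1}^{M}\prod_{j=1}^{M+1}|y_i-x_j|^{\delta-1}dy = \frac{\Gamma(\delta)^{M+1}}{\Gamma((M+1)\delta)}\prod_{1\le i<j\le M+1}(x_i-x_j)^{2\delta-1},
\end{equation*}
applied once per level of integration on each side.

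For the non-tilde side, a direct expansion of $\phi_{M,M+1}^{(\delta)}(y,x)$ followed by multiplication by $\prod_{i<j}(y_i-y_j)^{\delta}$ should bring the integrand exactly into Dixon--Anderson form, yielding the eigenfunction identity
\begin{equation*}
\int_{y\prec x}\phi_{M,M+1}^{(\delta)}(y,x)\prod_{i<j}(y_i-y_j)^{\delta}dy = \frac{\Gamma(\delta)}{\Gamma((M+1)\delta)}\prod_{i<j}(x_i-x_j)^{\delta}.
\end{equation*}
Iterating from $M=1$ (where the bottom-level Vandermonde factor is trivially $1$) up to $M=kq-1$ integrates out $x^{(1)},\ldots,x^{(kq-1)}$ and produces the factor $\prod_{M=2}^{kq}\Gamma(\delta)/\Gamma(M\delta)$ times $\prod_{i<j}(x^{(kq)}_i-x^{(kq)}_j)^{\delta}$.

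The tilde side is more delicate because of the bordering $\mathfrak{e}_c(z)=(1,z_1,\ldots,z_M,0)$. Expanding $\phi_{M+1,M+2}^{(\delta)}(w,\mathfrak{e}_c(z))$ produces boundary terms $\prod_j[w_j(1-w_j)]^{\delta-1}$ coming from the $|1-w_j|$ and $|0-w_j|$ differences, together with corresponding factors $\prod_i[z_i(1-z_i)]^{1-\delta}$ from the enlarged Vandermonde of $\mathfrak{e}_c(z)$. My plan is to propagate the weight $g_M(z):=\prod_i[z_i(1-z_i)]^{\delta-1}\prod_{i<j}(z_i-z_j)^{\delta}$, which is engineered so that the $[z_i(1-z_i)]$-exponents cancel exactly and the power of the $z$-Vandermonde collapses to $1$. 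This brings the $z$-integral into Dixon--Anderson form and should give the companion identity
\begin{equation*}
\int_{z\prec w}g_M(z)\,\phi_{M+1,M+2}^{(\delta)}(w,\mathfrak{e}_c(z))\,dz = \frac{1}{\Gamma((M+1)\delta)}g_{M+1}(w).
\end{equation*}
The base case $\phi_{1,2}^{(\delta)}(\tilde{x}^{(1)},(1,0)) = \Gamma(\delta)^{-1}g_1(\tilde{x}^{(1)})$ follows by direct substitution, and iterating from $M=0$ to $M=kq-1$ integrates out all of $\tilde{x}^{(1)},\ldots,\tilde{x}^{(kq-1)}$, producing $\prod_{j=1}^{kq}\Gamma(j\delta)^{-1}$ times $g_{kq}(x^{(kq)})$.

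Multiplying the two top-row weights and combining the Gamma prefactors should yield $\prod_{M=1}^{kq}\Gamma(\delta)/\Gamma(M\delta)^2$ together with the top-row integrand $\prod_{i<j}(x_i-x_j)^{2\delta}\prod_i[x_i(1-x_i)]^{\delta-1}$. Integrating this over $\mathsf{W}_+^{(q)}\cap[0,1]^q$ for $k=1$ and over $\mathsf{W}_+^{(2q)}\cap[0,1]^{2q}$ with the single constraint $\sum_i x_i=q$ for $k=2$ then establishes both integral representations stated in the lemma. For $k=1$, the remaining integral is a special case of the Selberg integral with parameters $\alpha=\beta_1=\beta_2=\delta$; after symmetrizing over the Weyl chamber and absorbing the resulting $1/q!$ via $\Gamma(1+\delta)=\delta\Gamma(\delta)$, the product should telescope to $\prod_{i=1}^{q}\Gamma(i\delta)/\Gamma((q+i)\delta)$. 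The main obstacle I anticipate is purely one of careful bookkeeping on the tilde side: verifying that all $z$-dependent boundary factors produced by $\mathfrak{e}_c$ cancel cleanly against $g_M(z)$, so that the Dixon--Anderson step reproduces $g_{M+1}(w)$ exactly without any residual terms that would contaminate the subsequent iterations.
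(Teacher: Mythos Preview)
Your plan is correct and coincides with the paper's proof in substance. The paper fixes the centre row $x^{(kq)}$, integrates out each of the two interlacing arrays separately, and invokes the known normalisation constant of the orbital beta process (cited from Gorin--Marcus) to obtain exactly the two integrated formulae you derive; the remaining top-row integral is then identified with Selberg's integral for $k=1$. Your iterated Dixon--Anderson computation is precisely the standard derivation of that orbital beta normalisation, so you are carrying out explicitly what the paper cites as a known result. The only difference is one of presentation: you work level by level with the eigenfunction/companion identities, whereas the paper packages the same calculation as a single reference. Your bookkeeping on the tilde side (the cancellation of the $[z_i(1-z_i)]^{\pm(\delta-1)}$ factors against $g_M$ and the collapse of the $z$-Vandermonde exponent to $1$) checks out cleanly, and the Gamma prefactors combine to $\prod_{M=1}^{kq}\Gamma(\delta)/\Gamma(M\delta)^2$ as claimed.
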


\begin{proof}
Observe that we have:
\begin{align}
 \prod_{M=1}^{kq-1}\phi_{M,M+1}^{(\delta)}\left(x^{(M)},x^{(M+1)}\right)=\prod_{M=1}^{kq-1}\frac{1}{\Gamma(\delta)^M}\prod_{M=1}^{kq-1}\prod_{i=1}^{M+1}\prod_{j=1}^M\big|x_i^{(M+1)}-x_j^{(M)}\big|^{\delta-1}\nonumber\\\times\prod_{M=2}^{kq-1}\prod_{1\le i<j\le M}\left(x_i^{(M)}-x_j^{(M)}\right)^{2-2\delta}
 \prod_{1\le i<j\le kq}\left(x_i^{(kq)}-x_j^{(kq)}\right)^{1-\delta}\label{weightexplicitdisplay1}
\end{align}
and similarly (recall that $x^{(kq)}=\tilde{x}^{(kq)}$):
\begin{align}
     \prod_{M=0}^{kq-1}\phi_{M+1,M+2}^{(\delta)}\left(\tilde{x}^{(M+1)},\mathfrak{e}_c\left(\tilde{x}^{(M)}\right)\right)=\prod_{M=1}^{kq}\frac{1}{\Gamma(\delta)^M}\prod_{M=1}^{kq-1}\prod_{i=1}^{M+1}\prod_{j=1}^M\big|\tilde{x}_i^{(M+1)}-\tilde{x}_j^{(M)}\big|^{\delta-1}\nonumber\\
     \times\prod_{M=2}^{kq-1}\prod_{1\le i<j\le M}\left(\tilde{x}_i^{(M)}-\tilde{x}_j^{(M)}\right)^{2-2\delta} \prod_{1\le i<j\le kq}\left(x_i^{(kq)}-x_j^{(kq)}\right)^{1-\delta}\prod_{i=1}^{kq}\left[x_i^{(kq)}\left(1-x_i^{(kq)}\right)\right]^{\delta-1}.\label{weightexplicitdisplay2}
\end{align}
We notice that these weights are up to a factor involving only $x^{(kq)}$, given by the orbital beta probability distribution on continuous interlacing arrays with fixed top row $x^{(kq)}$, see Definition 1.3 in \cite{GorinMarcus}, also \cite{CuencaOrbital}, \cite{AssiotisNajnudel}. 

We now argue as follows. For $k=1$ and $k=2$ we fix the centre row $x^{(kq)}$ (note that for $k=2$ there is a single sum constraint only on this row) and perform the integrations over the two individual arrays with fixed top row $x^{(kq)}$. This choice of the order of integration is possible by Tonelli's theorem since the integrand is positive.  Then, the corresponding integrals over the interlacing arrays with fixed top row $x^{(kq)}$ are known to have an explicit evaluation given as follows:
\begin{align*}
 \int_{x^{(1)}\prec x^{(2)}\prec x^{(3)}\prec \dots \prec x^{(kq-1)}\prec x^{(kq)}}  \prod_{M=1}^{kq-1}\phi_{M,M+1}^{(\delta)}\left(x^{(M)},x^{(M+1)}\right) dx^{(1)}dx^{(2)}dx^{(3)}\dots dx^{(kq-1)}\\
 =\Gamma(\delta)^{kq}\prod_{M=1}^{kq}\frac{1}{\Gamma(M\delta)}\prod_{1\le i<j\le kq}\left(x_i^{(kq)}-x_j^{(kq)}\right)^\delta,\\
  \int_{\tilde{x}^{(1)}\prec \tilde{x}^{(2)}\prec \tilde{x}^{(3)}\prec \dots \prec \tilde{x}^{(kq-1)}\prec \tilde{x}^{(kq)}=x^{(kq)}}  \prod_{M=0}^{kq-1}\phi_{M,M+1}^{(\delta)}\left(\tilde{x}^{(M+1)},\mathfrak{e}_c\left(\tilde{x}^{(M)}\right)\right) d\tilde{x}^{(1)}d\tilde{x}^{(2)}d\tilde{x}^{(3)}\dots d\tilde{x}^{(kq-1)}\\
  =\prod_{M=1}^{kq}\frac{1}{\Gamma(M\delta)}\prod_{1\le i<j\le kq}\left(x_i^{(kq)}-x_j^{(kq)}\right)^\delta\prod_{i=1}^{kq}\left[x_i^{(kq)}\left(1-x_i^{(kq)}\right)\right]^{\delta-1}.
\end{align*}
This is exactly the evaluation of the normalisation constant for the orbital beta probability distribution given in displays (8) and (9) of Definition 1.3 in \cite{GorinMarcus}, see also Remark 1.4 therein. We note that the argument above would not work for $k\ge 3$ as the sum constraints are involved not only on the row where the arrays are joined.

This readily gives the integral expressions for $\mathfrak{c}^{(\beta)}(1;q)$ and $\mathfrak{c}^{(\beta)}(2;q)$. The final expression for $\mathfrak{c}^{(\beta)}(1;q)$ is then an immediate consequence of the explicit evaluation of Selberg's integral, see \cite{Forrester}. As far as we can tell the integral expression for $\mathfrak{c}^{(\beta)}(2;q)$ is not known to have an explicit evaluation and we leave it in the form (\ref{Integralk=2}).
\end{proof}

Using formula (\ref{Integralk=2}) it is relatively straightforward to show that $\mathcal{A}(2;q)=(0,4q^2)$.

\begin{lem}\label{LemmaIntegrabilityk=2}
Let $q \in \mathbb{N}$. Then, $\mathcal{A}(2;q)=(0,4q^2)$.
\end{lem}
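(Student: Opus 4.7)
The plan is to use the explicit integral representation
\begin{align*}
\mathfrak{c}^{(\beta)}(2;q)=C_{\beta,q}\int_{D}\prod_{1\le i<j \le 2q}(x_i-x_j)^{2\delta}\prod_{i=1}^{2q}[x_i(1-x_i)]^{\delta-1}dx
\end{align*}
from Lemma \ref{ExplicitExpressions} (with $\delta=2/\beta$ and $C_{\beta,q}>0$), where $D=\{x\in[0,1]^{2q}:x_1\ge\cdots\ge x_{2q},\ \sum_{i}x_i=q\}$, and perform a scaling analysis at the boundary. Since $2\delta>0$, the Vandermonde factors are continuous and bounded on $D$, so singularities of the integrand arise only from $[x_i(1-x_i)]^{\delta-1}$ at $x_i\in\{0,1\}$ when $\delta<1$ (that is, $\beta>2$). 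If $m$ coordinates approach $0$ and $n$ approach $1$, the remaining $2q-m-n$ interior coordinates must sum to $q-n+o(1)$ while lying strictly inside $(0,1)$; this forces either $(m,n)=(q,q)$ (with no interior coordinates) or $m+n<2q$ with both $m<q$ and $n<q$. Hence the unique critical boundary point is $P=(\underbrace{1,\dots,1}_q,\underbrace{0,\dots,0}_q)$.

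Near $P$ I would substitute $\epsilon_i=1-x_i$ for $i\le q$ and $\eta_j=x_{q+j}$ for $j\le q$, turning the constraint into $\sum_{i=1}^q \epsilon_i=\sum_{j=1}^q \eta_j$ and $P$ into the origin, and then introduce a radial coordinate by $\epsilon_i=ru_i$, $\eta_j=rv_j$ with $(u,v)$ on the compact slice $\sum u_i=\sum v_j=1$. In a small neighbourhood of $P$ the cross factors $(1-\epsilon_i-\eta_j)^{2\delta}$, $(1-\epsilon_i)^{\delta-1}$, $(1-\eta_j)^{\delta-1}$ are bounded above and below by positive constants, the top and bottom Vandermonde blocks each contribute $r^{q(q-1)\delta}$, and the singular endpoint factors $\prod_i \epsilon_i^{\delta-1}\prod_j \eta_j^{\delta-1}$ contribute $r^{2q(\delta-1)}$. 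Combined with the $(2q-1)$-dimensional radial Jacobian $r^{2q-2}\,dr$, this gives an overall $r$-dependence $r^{2q^2\delta-2}$, whose radial integral is finite iff $2q^2\delta-2>-1$, i.e.\ iff $\beta<4q^2$.

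To make the scaling rigorous I would verify that the associated angular integral, of $\prod u_i^{\delta-1}\prod v_j^{\delta-1}\prod_{i<j}|u_i-u_j|^{2\delta}\prod_{i<j}|v_i-v_j|^{2\delta}$ over the compact product of two $(q-1)$-simplices, is a Dirichlet/Selberg type integral that is finite and strictly positive for every $\delta>0$. This produces finiteness near $P$ when $\beta<4q^2$ and, by bounding the integrand from below on a subregion bounded away from the zeros and singularities of the angular density, a matching lower bound giving divergence when $\beta\ge 4q^2$. For any other boundary configuration $(m,n)\ne(q,q)$ the interior coordinates stay bounded away from $\{0,1\}$ and the analogous scaling with radial Jacobian $r^{m+n-1}\,dr$ yields an $r$-exponent $\delta(m^2+n^2)-1>-1$, which is always integrable. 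The main obstacle I expect is the uniform control of these power counts across all boundary configurations together with the verification of both the finiteness and strict positivity of the angular integral; once these are in place, the critical exponent $1/(2q^2)$ emerges from the direct power count at $P$, yielding $\mathcal{A}(2;q)=(0,4q^2)$.
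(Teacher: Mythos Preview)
Your proposal is correct and follows essentially the same approach as the paper: both start from the integral representation of Lemma~\ref{ExplicitExpressions}, identify the unique worst boundary point $P=(1,\dots,1,0,\dots,0)$ via the sum constraint, and compare the order of the singularity there against the effective dimension $2q-1$, with the lower-dimensional strata handled by the same power count $\delta(m^2+n^2)-1>-1$. The only cosmetic difference is that the paper phrases the computation as ``order of singularity versus dimension'' while you carry it out via an explicit polar blow-up $(\epsilon,\eta)=r(u,v)$; the resulting threshold $2q^2\delta>1\iff\beta<4q^2$ is identical.
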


\begin{proof}
We show that the integral (\ref{Integralk=2}) is finite if and only if $\beta<4q^2$ (recall that by definition $\beta>0$). Observe that the integral is $(2q-1)$-dimensional, by eliminating one of the variables due to the sum constraint, for example $x_{2q}=q-\sum_{i=1}^{2q-1}x_i$. We first claim and we will justify at the end of the proof that ``singular manifolds" which are not full-dimensional, i.e. singularities over a number of variables $\mathsf{n}<2q-1$, are always integrable for any $\beta>0$. Hence, for now we restrict attention to the full-dimensional case. Observe that the only singularities occur when the coordinates are either $0$ or $1$ and each such singularity is of order\footnote{Clearly when $\beta \le 2$ there are no singularities which is consistent with the fact that the integrand is uniformly bounded when $\beta \le 2$ as proven in Lemma \ref{WeightBoundedness}.} $\left(1-\frac{2}{\beta}\right)$. The most singular points are then the ones all of whose coordinates are either $0$ or $1$ which by the sum constraint and ordering of the coordinates uniquely identifies a single point $\mathfrak{x}_{\star}(q)$ given by\footnote{We have written out $\mathfrak{x}_{\star}(q)$ as a $2q$-dimensional point but recall that one of the coordinates is completely determined by the sum constraint.}:
\begin{align*}
\mathfrak{x}_{\star}(q)=(\underbrace{1,\dots,1}_\textrm{$q$},\underbrace{0,\dots,0}_\textrm{$q$}).
\end{align*}
If any of the coordinates of a point are not $0$ or $1$ then the singularity at that point is automatically integrable for any $\beta>0$. Finally, we need to take into account that for each pair of coordinates which coalesce there is a factor vanishing at order $\frac{4}{\beta}$ in the integrand. Thus, putting everything together we obtain that the singularity at $\mathfrak{x}_{\star}(q)$ is of order:
\begin{equation*}
 2q\left(1-\frac{2}{\beta}\right)+\left[\binom{q}{2}+\binom{q}{2}
\right]\left(-\frac{4}{\beta}\right)=2q-\frac{4}{\beta}q^2. 
\end{equation*}
 This is integrable if and only if\footnote{In this case, a singularity of order $\mathfrak{o}$ is integrable if and only if $\mathfrak{o}<\mathfrak{d}$ where $\mathfrak{d}$ is the dimension of the integral at hand; making use of spherical coordinates around the singular point, since we get a factor $r^{\mathfrak{d}-1}$ from the Jacobian, this boils down to the integrability of $r^{-\mathfrak{o}+\mathfrak{d}-1}$ at $0$.}:
 \begin{equation*}
2q-\frac{4}{\beta}q^2 < 2q-1,
 \end{equation*}
 which gives $\beta<4q^2$.
 
 Returning to the claim we made earlier, suppose that we are looking at a singularity over a number of variables $\mathsf{n}<2q-1$. Suppose $\mathsf{n}_0$ of these variables are $0$ and $\mathsf{n}_1$ are $1$, so that $\mathsf{n}_0+\mathsf{n}_1\le \mathsf{n}$ (we must also have $\mathsf{n}_1\le q$ due to the sum constraint but we will not need to use this extra restriction). Then, by completely analogous considerations to the ones above for $\mathfrak{x}_\star(q)$, the order of the singularity at such a point is given by (note that when $\mathsf{n}=2q-1$ and in the particular case of $\mathfrak{x}_\star(q)$ above we also picked up the singularity coming from the fixed coordinate $x_{2q}$):
 \begin{equation*}
\left(\mathsf{n}_0+\mathsf{n}_1\right)\left(1-\frac{2}{\beta}\right)+ \left[\binom{\mathsf{n}_0}{2}+\binom{\mathsf{n}_1}{2}\right]\left(-\frac{4}{\beta}\right)   =   \mathsf{n}_0+\mathsf{n}_1-\left(\mathsf{n}_0^2+\mathsf{n}_1^2\right)\frac{2}{\beta}.
 \end{equation*}
 This singularity is then integrable for any $\beta>0$, since:
 \begin{equation*}
   \mathsf{n}_0+\mathsf{n}_1-\left(\mathsf{n}_0^2+\mathsf{n}_1^2\right)\frac{2}{\beta}  <\mathsf{n}
 \end{equation*}
 and this concludes the proof.
 \end{proof}
 
 \begin{figure}[hbt!]

\scalebox{0.66}{
\begin{tikzpicture}
 \draw [fill=gray!20] (4,4) -- (2,6.5)-- (0,4)--(2,1.5); 
 
  \draw [fill=gray!20] (4,4) -- (6,6.5)-- (8,4)--(6,1.5); 

\draw[ultra thick, blue]   (4,-1) -- (0,4);
\draw[ultra thick,blue]   (4,9) -- (0,4);
\draw[ultra thick,blue]   (4,9) -- (8,4);
 \draw[ultra thick,blue]   (4,-1) -- (8,4);
 
  \node[] at (2,5) {\LARGE \bf{0}};

     \node[] at (6,5) {\LARGE \bf{1}};

   \draw[ultra thick,red]   (0,4) -- (8,4);

     \draw[ultra thick]   (4,4) -- (2,6.5);      
    \draw[ultra thick]   (4,9) -- (2,6.5);      
     \draw[ultra thick]   (4,4) -- (6,6.5);      
     \draw[ultra thick]   (4,9) -- (6,6.5);

        \draw[ultra thick]   (4,4) -- (2,1.5);      
        \draw[ultra thick]   (4,4) -- (6,1.5);      
        \draw[ultra thick]   (4,-1) -- (6,1.5);      
        \draw[ultra thick]   (4,-1) -- (2,1.5);

\draw[fill,green] (4,-0.8) circle [radius=0.1];   
\draw[fill,green] (3.7,-0.4) circle [radius=0.1];   
\draw[fill,green] (4.3,-0.4) circle [radius=0.1];   
\draw[fill,green] (3.4,0) circle [radius=0.1];   
\draw[fill,green] (4,0) circle [radius=0.1];   
\draw[fill,green] (4.6,0) circle [radius=0.1];   

  \node[] at (4,0.6) {$\boldsymbol{\vdots}$};

\draw[fill,green] (4,3.75) circle [radius=0.1];   
\draw[fill,green] (3.7,3.4) circle [radius=0.1];   
\draw[fill,green] (4.3,3.4) circle [radius=0.1];   
\draw[fill,green] (3.4,3) circle [radius=0.1];   
\draw[fill,green] (4,3) circle [radius=0.1];   
\draw[fill,green] (4.6,3) circle [radius=0.1];  

  \node[] at (4,2.4) {$\boldsymbol{\vdots}$};

 \draw[fill,green] (4,8.8) circle [radius=0.1];   
\draw[fill,green] (3.7,8.4) circle [radius=0.1];   
\draw[fill,green] (4.3,8.4) circle [radius=0.1];   
\draw[fill,green] (3.4,8) circle [radius=0.1];   
\draw[fill,green] (4,8) circle [radius=0.1];   
\draw[fill,green] (4.6,8) circle [radius=0.1];   

  \node[] at (4,7.4) {$\boldsymbol{\vdots}$};

\draw[fill,green] (4,4.25) circle [radius=0.1];   
\draw[fill,green] (3.7,4.6) circle [radius=0.1];   
\draw[fill,green] (4.3,4.6) circle [radius=0.1];   
\draw[fill,green] (3.4,5) circle [radius=0.1];   
\draw[fill,green] (4,5) circle [radius=0.1];   
\draw[fill,green] (4.6,5) circle [radius=0.1];  

  \node[] at (4,5.6) {$\boldsymbol{\vdots}$};

  \node[] at (6,8) {\Large \bf{k=2}};

\end{tikzpicture}

\begin{tikzpicture}

 \draw [fill=gray!20] (3.2,0) -- (4,1)--(3.2,2)--(4,3)--(3.2,4)--(4,5)--(3.2,6)--(4,7)--(3.2,8) --(0,4)--(3.2,0); 
 
  \draw [fill=gray!20] (4.8,0) -- (4,1)--(4.8,2)--(4,3)--(4.8,4)--(4,5)--(4.8,6)--(4,7)--(4.8,8) --(8,4)--(4.8,0);

\draw[ultra thick, blue]   (4,-1) -- (0,4);
\draw[ultra thick,blue]   (4,9) -- (0,4);
\draw[ultra thick,blue]   (4,9) -- (8,4);
 \draw[ultra thick,blue]   (4,-1) -- (8,4);

   \draw[ultra thick,red]   (2.4,7) -- (5.6,7);

       \draw[ultra thick,red]   (2.4,1) -- (5.6,1);         
      \draw[ultra thick,red]   (0.8,3) -- (7.2,3);            
      
\draw[ultra thick,red]   (0.8,5) -- (7.2,5);

  \draw[ultra thick]   (4,7) -- (3.2,8);      
    \draw[ultra thick]   (3.2,8) -- (4,9);      
     \draw[ultra thick]   (4,9) -- (4.8,8);      
     \draw[ultra thick]   (4.8,8) -- (4,7);      
     
   \draw[ultra thick]   (4,-1) -- (3.2,0);      
    \draw[ultra thick]   (3.2,0) -- (4,1);      
     \draw[ultra thick]   (4.8,0) -- (4,1);      
     \draw[ultra thick]   (4.8,0) -- (4,-1);     
     
   \draw[ultra thick]   (4,1) -- (3.2,2);      
    \draw[ultra thick]   (3.2,2) -- (4,3);      
     \draw[ultra thick]   (4.8,2) -- (4,3);      
     \draw[ultra thick]   (4.8,2) -- (4,1);     
     
     \draw[ultra thick]   (4,3) -- (3.2,4);      
    \draw[ultra thick]   (3.2,4) -- (4,5);      
     \draw[ultra thick]   (4,5) -- (4.8,4);      
     \draw[ultra thick]   (4.8,4) -- (4,3);

     \draw[ultra thick]   (4,5) -- (3.2,6);      
    \draw[ultra thick]   (3.2,6) -- (4,7);      
     \draw[ultra thick]   (4,7) -- (4.8,6);      
     \draw[ultra thick]   (4.8,6) -- (4,5);

  \node[] at (2,4) {\LARGE \bf{0}};

     \node[] at (6,4) {\LARGE \bf{1}};

 \draw[fill,green] (4,-0.75) circle [radius=0.1];   
\draw[fill,green] (3.8,-0.45) circle [radius=0.1];   
\draw[fill,green] (4.2,-0.45) circle [radius=0.1];

  \node[] at (4,0.1) {$\boldsymbol{\vdots}$}; 
  
  \draw[fill,green] (4,0.75) circle [radius=0.1];   
\draw[fill,green] (3.8,0.45) circle [radius=0.1];   
\draw[fill,green] (4.2,0.45) circle [radius=0.1];

 \draw[fill,green] (4,1.25) circle [radius=0.1];   
\draw[fill,green] (3.8,1.55) circle [radius=0.1];   
\draw[fill,green] (4.2,1.55) circle [radius=0.1];

  \node[] at (4,2.1) {$\boldsymbol{\vdots}$}; 
  
  \draw[fill,green] (4,2.75) circle [radius=0.1];   
\draw[fill,green] (3.8,2.45) circle [radius=0.1];   
\draw[fill,green] (4.2,2.45) circle [radius=0.1];

 \draw[fill,green] (4,3.25) circle [radius=0.1];   
\draw[fill,green] (3.8,3.55) circle [radius=0.1];   
\draw[fill,green] (4.2,3.55) circle [radius=0.1];

  \node[] at (4,4.1) {$\boldsymbol{\vdots}$}; 
  
  \draw[fill,green] (4,4.75) circle [radius=0.1];   
\draw[fill,green] (3.8,4.45) circle [radius=0.1];   
\draw[fill,green] (4.2,4.45) circle [radius=0.1];

 \draw[fill,green] (4,5.25) circle [radius=0.1];   
\draw[fill,green] (3.8,5.55) circle [radius=0.1];   
\draw[fill,green] (4.2,5.55) circle [radius=0.1];

  \node[] at (4,6.1) {$\boldsymbol{\vdots}$}; 
  
  \draw[fill,green] (4,6.75) circle [radius=0.1];   
\draw[fill,green] (3.8,6.45) circle [radius=0.1];   
\draw[fill,green] (4.2,6.45) circle [radius=0.1];

 \draw[fill,green] (4,7.25) circle [radius=0.1];   
\draw[fill,green] (3.8,7.55) circle [radius=0.1];   
\draw[fill,green] (4.2,7.55) circle [radius=0.1];

  \node[] at (4,8.1) {$\boldsymbol{\vdots}$}; 
  
  \draw[fill,green] (4,8.75) circle [radius=0.1];   
\draw[fill,green] (3.8,8.45) circle [radius=0.1];   
\draw[fill,green] (4.2,8.45) circle [radius=0.1];

  \node[] at (6,8) {\Large \bf{k=5}};

\end{tikzpicture}

\begin{tikzpicture}

 \draw [fill=gray!20] (0.8,0.8) -- (2,2)--(0.8,3.2)--(0.4,2.8)--(1.2,2)--(0.4,1.2)--(0.8,0.8); 
 \draw [fill=gray!20] (-0.8,0.8) -- (-2,2)--(-0.8,3.2)--(-0.4,2.8)--(-1.2,2)--(-0.4,1.2)--(-0.8,0.8); 

  \node[] at (1.6,2) {\Large \bf{1}};
  \node[] at (-1.6,2) {\Large \bf{0}};

  \node[] at (1.2,4.3) {\Large \bf{k=3,q=2}};
  
     \draw[ultra thick,red]   (-1.2,1.2) -- (1.2,1.2);      
     \draw[ultra thick,red]   (-1.2,2.8) -- (1.2,2.8);

  \draw[fill,green] (0,0) circle [radius=0.1];   
    \draw[fill,green] (-0.4,0.4) circle [radius=0.1];   
       \draw[fill,green] (0.4,0.4) circle [radius=0.1];   
     \draw[fill,green] (0,0.8) circle [radius=0.1];

    \draw[fill] (-0.8,0.8) circle [radius=0.1];   
        \draw[fill] (0.8,0.8) circle [radius=0.1];   
        \draw[fill] (-1.2,1.2) circle [radius=0.1];   
        \draw[fill] (1.2,1.2) circle [radius=0.1];   
        \draw[fill] (-0.4,1.2) circle [radius=0.1];   
        \draw[fill] (0.4,1.2) circle [radius=0.1];   
        
     \draw[fill,green] (0,1.6) circle [radius=0.1];   
    \draw[fill,green] (-0.4,2) circle [radius=0.1];   
       \draw[fill,green] (0.4,2) circle [radius=0.1];   
     \draw[fill,green] (0,2.4) circle [radius=0.1];

           \draw[fill,green] (0,3.2) circle [radius=0.1];   
    \draw[fill,green] (-0.4,3.6) circle [radius=0.1];   
       \draw[fill,green] (0.4,3.6) circle [radius=0.1];   
     \draw[fill,green] (0,4) circle [radius=0.1];

         \draw[fill] (-0.8,3.2) circle [radius=0.1];   
        \draw[fill] (0.8,3.2) circle [radius=0.1];   
        \draw[fill] (-1.2,2.8) circle [radius=0.1];   
        \draw[fill] (1.2,2.8) circle [radius=0.1];   
        \draw[fill] (-0.4,2.8) circle [radius=0.1];   
        \draw[fill] (0.4,2.8) circle [radius=0.1]; 
        
              \draw[fill] (0.8,2.4) circle [radius=0.1];   
        \draw[fill] (-0.8,2.4) circle [radius=0.1];   
        \draw[fill] (1.6,2.4) circle [radius=0.1];   
        \draw[fill] (-1.6,2.4) circle [radius=0.1];   

                 \draw[fill] (0.8,1.6) circle [radius=0.1];
        \draw[fill] (-0.8,1.6) circle [radius=0.1];   
        \draw[fill] (1.6,1.6) circle [radius=0.1];   
        \draw[fill] (-1.6,1.6) circle [radius=0.1];   
        
      \draw[fill] (1.2,2) circle [radius=0.1];
        \draw[fill] (-1.2,2) circle [radius=0.1];   
        \draw[fill] (2,2) circle [radius=0.1];   
        \draw[fill] (-2,2) circle [radius=0.1];

\end{tikzpicture}

}

\caption{The figures depict the restricted set of variables that we are integrating over in the proof of Lemma \ref{LemmaSupremum} shown shaded in grey. The variables we have removed (not integrating over) are depicted as green particles. As shown in the figure these correspond to the $k$ squares defined in the proof of Lemma \ref{LemmaSupremum}. The rows with the sum constraints are depicted as solid red lines. The figures also depict the definition of the point $\mathfrak{x}(k;q)$ half of whose coordinates are $0$'s and the other half $1$'s as shown here. Finally, observe that in the special case $k=2$ the point $\mathfrak{x}(2;q)$ corresponds to the point $\mathfrak{x}_{\star}(q)$ from the proof of Lemma \ref{LemmaIntegrabilityk=2}.}\label{Figure2}
\end{figure}

We now move on to prove the following result on finiteness of $\mathfrak{c}^{(\beta)}(k;q)$ when $k\ge 2$; clearly for $k=2$ this is a consequence of Lemma \ref{LemmaIntegrabilityk=2}. Unfortunately for $k\ge 3$, as far as we are aware, there is no analogous simplification as in Lemma \ref{ExplicitExpressions} and we need to analyse the integral over the whole interlacing array with constraints. Then, to prove this result we simply exhibit a singularity of the integrand which is not integrable if $\beta\ge 2kq^2$. This choice of singularity might seem to come out of thin air but we give some intuition for it after the proof. We expect that this singularity is in fact the optimal one in the sense that it gives the strictest restriction on $\beta$, which would show that $\mathcal{A}(k;q)=(0,2kq^2)$. We give some brief heuristics in support of this claim after the proof of the result.

Finally, the reader is advised to study Figure \ref{Figure2} and Figure \ref{Figure3}, while reading the proof of Lemma \ref{LemmaSupremum}, which help elucidate the argument.

\begin{lem}\label{LemmaSupremum} 
Let $k,q \in \mathbb{N}$ with $k\ge 2$. Then, $[2kq^2,\infty)\cap\mathcal{A}(k;q)=\emptyset$.

\end{lem}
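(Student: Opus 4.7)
The plan is to exhibit an explicit non-integrable singularity of the integrand in (\ref{ConstIntegralRep}) near a distinguished boundary point of $\mathsf{I}_c(k;q)$ whenever $\beta \ge 2kq^2$. Since the integrand is non-negative, once we find a sub-region on which the integral is already infinite, Tonelli's theorem forces $\mathfrak{c}^{(\beta)}(k;q) = +\infty$, i.e., $\beta \notin \mathcal{A}(k;q)$.

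I first specify a point $\mathfrak{x}(k;q) \in \overline{\mathsf{I}_c(k;q)}$ all of whose coordinates take values in $\{0,1\}$, split evenly into halves and arranged consistently with both the interlacing inequalities and the $(k-1)$ sum constraints; concretely, each row can be taken of the form $(1,\dots,1,0,\dots,0)$. This generalises the point $\mathfrak{x}_{\star}(q)$ appearing in the proof of Lemma \ref{LemmaIntegrabilityk=2} and is indicated by the black dots in Figure \ref{Figure2}. I then freeze the coordinates drawn as green particles in Figure \ref{Figure2} at their $\mathfrak{x}(k;q)$-values and integrate only over the complementary grey coordinates. By positivity and Tonelli, if this restricted integral diverges then so does the full integral. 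The grey region decomposes into the $k$ sub-blocks shown in Figure \ref{Figure2}, each a lower-dimensional interlacing sub-array pinned at the boundary by the frozen $0$'s and $1$'s.

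On this sub-region I introduce a single radial parameter $s \in (0,\epsilon]$ controlling the distance to $\mathfrak{x}(k;q)$: every grey coordinate near $1$ is written as $1-su$ and every grey coordinate near $0$ as $sv$, with $u,v$ strictly positive angular variables subject to the linear relations imposed by the remaining active sum constraints. In these coordinates the integrand factorises near $s=0$ as $s^{\alpha} R(s)$, where $R$ is bounded away from $0$ and $\infty$ in the angular variables, and the exponent $\alpha$ is obtained by a power count based on the explicit product formula for $\phi^{(\delta)}_{M,M+1}$: each pair of same-type coordinates (both approximately $0$ or both approximately $1$) in adjacent rows contributes $s^{\delta-1}$ with $\delta = 2/\beta$ through a factor $|x_i^{(M+1)} - x_j^{(M)}|^{\delta-1}$, while each same-type within-row pair contributes the offsetting $s^{1-\delta}$ through the $(x_i - x_j)^{1-\delta}$ or $(y_i - y_j)^{1-\delta}$ factors.

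The concluding step is a combinatorial check, using the $k$-block structure, that the net exponent $\alpha(k,q,\beta)$ indeed satisfies $\alpha \le -1$ for every $\beta \ge 2kq^2$; this is the natural extension of the explicit power count carried out for $k=2$ in the proof of Lemma \ref{LemmaIntegrabilityk=2}, where the critical factor was $a^{2q-(4/\beta)q^2}$. Once the bound on $\alpha$ is in place, $\int_0^{\epsilon} s^{\alpha}\, ds = +\infty$ gives divergence of the restricted integral, and hence of $\mathfrak{c}^{(\beta)}(k;q)$. The chief obstacle is identifying the correct slice and the correct one-parameter scaling direction: a naive parametrisation in which \emph{all} coordinates scale together with a common factor introduces a spurious dimension and produces a weaker exponent, so one must exploit the freezing of the green coordinates in Figure \ref{Figure2} to ensure that the residual scaling is genuinely one-dimensional modulo the interlacing and sum constraints.
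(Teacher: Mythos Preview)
Your overall strategy---isolate a distinguished boundary point and show the integrand has a non-integrable singularity there via a power count---matches the paper's. The gap is in how you treat the green coordinates. You freeze them at their ``$\mathfrak{x}(k;q)$-values'', which by your own definition lie in $\{0,1\}$. This creates two problems. First, a single slice has Lebesgue measure zero, so ``by positivity and Tonelli, if this restricted integral diverges then so does the full integral'' is not a valid inference: Tonelli gives divergence of the full integral only once the inner (grey) integral diverges for a \emph{positive-measure} set of green values. Second, pinning a green coordinate at $0$ or $1$ is incompatible with a non-degenerate angular region for the adjacent grey coordinates: e.g.\ if $x^{(1)}=0$ is frozen and the neighbouring grey coordinate is written as $x^{(2)}_2 = sv$ with $v>0$, interlacing requires $x^{(1)} \ge x^{(2)}_2$, i.e.\ $0 \ge sv$, collapsing the angular domain. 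The paper instead keeps the green coordinates in the \emph{interior} (any fixed interior value, or any interior box of positive measure): then every green--grey interaction factor $|x^{\text{grey}}_i - x^{\text{green}}_j|^{\delta-1}$ is bounded above and below uniformly as the grey coordinates approach $\mathfrak{x}(k;q)$, so these factors contribute nothing to the singularity order, the slice integral diverges for every interior green value, and Tonelli then legitimately yields divergence of the full integral.

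You also leave the power count itself as an unverified assertion. The paper carries it out (the mechanism is illustrated in Figure~\ref{Figure3}): adding the solid-edge contributions and the centre-row vertex contributions and subtracting twice the dashed-edge contributions, the singularity at $\mathfrak{x}(k;q)$ in the grey variables has order $kq^2(k-1)\bigl(1-\tfrac{2}{\beta}\bigr)$, while the grey integral has dimension $(kq^2-1)(k-1)$. Non-integrability is then exactly the inequality $kq^2(k-1)\bigl(1-\tfrac{2}{\beta}\bigr) \ge (kq^2-1)(k-1)$, which rearranges to $\beta \ge 2kq^2$.
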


\begin{proof} We will show that the integral over a restricted set of variables, that we define next, is infinite when $\beta\ge 2kq^2$. We remove (from the set of all variables), namely we do not integrate over, the variables corresponding to $k$ squares of coordinates, as shown in Figure \ref{Figure2}, each of side $q$. The $i$-th square is uniquely determined by two of its diagonal vertices which are given by the mid-coordinate of the first row above (the row corresponding to) the $(i-1)$-th sum constraint and the mid-coordinate of the last row below (the row corresponding to) the $i$-th sum constraint, see Figure \ref{Figure2}. For the extreme cases $i=1$ and $i=k$ we take as the lower and upper vertices of the corresponding squares the coordinates $x_{1}^{(1)}$ and $\tilde{x}_1^{(1)}$ respectively, see Figure \ref{Figure2}. We have refrained from giving here the definition in terms of the indices of coordinates since it is too cumbersome and somewhat obscures the simple geometric picture. Observe that the corresponding integral over the restricted set of variables just defined is $(kq)^2-kq^2-(k-1)=(kq^2-1)(k-1)$ dimensional, since we have removed $kq^2$ coordinates and we include all of the $(k-1)$ sum constraints which fix a variable each.

Restricting to the set of variables just described we then consider the point $\mathfrak{x}(k;q)$ whose coordinates consist entirely of $0$'s and $1$'s, see Figure \ref{Figure2} for an illustration. Note that this description uniquely identifies $\mathfrak{x}(k;q)$: the rows involving the sum constraints are uniquely determined (half of the coordinates are $0$'s and the other half $1$'s and they are also ordered) and then the rest of the coordinates of $\mathfrak{x}(k;q)$ are determined by the interlacing, see Figure \ref{Figure2}.

Moreover, a direct but tedious combinatorial computation using formulae (\ref{weightexplicitdisplay1}), (\ref{weightexplicitdisplay2}) (see Figure \ref{Figure3} for some explanations and the computation in simple cases, the general case is analogous but notationally cumbersome) gives that the singularity at the point $\mathfrak{x}(k;q)$ is of order\footnote{As before this is non-positive for $\beta\le 2$ which is consistent with the fact that the integrand is uniformly bounded when $\beta \le 2$ as proven in Lemma \ref{WeightBoundedness}.} $kq^2(k-1)\left(1-\frac{2}{\beta}\right)$.
This singularity is then not integrable if:
\begin{align*}
kq^2(k-1)\left(1-\frac{2}{\beta}\right)\ge(kq^2-1)(k-1),    
\end{align*}
which gives that for $\beta\ge2kq^2$ the integral is infinite.
\end{proof}

\begin{figure}[hbt!]
\centering
\begin{tikzpicture}

  \node[] at (1.2,2.5) { \bf{k=3,q=1}};

     \draw[red,dashed]   (-0.5,0.5) -- (0.5,0.5);      
     \draw[red,dashed]   (-0.5,1.5) -- (0.5,1.5);      

     \draw[ultra thick]   (-0.5,0.5) -- (-1,1);      
     \draw[ultra thick]   (-0.5,1.5) -- (-1,1);   
     
       \draw[ultra thick]   (0.5,0.5) -- (1,1);      
     \draw[ultra thick]   (0.5,1.5) -- (1,1);  
     
     \draw[fill,green] (0,0) circle [radius=0.1];   
    \draw[fill] (-0.5,0.5) circle [radius=0.1];   
       \draw[fill] (0.5,0.5) circle [radius=0.1];   
     \draw[fill,green] (0,1) circle [radius=0.1];   
          \draw[fill,blue] (-1,1) circle [radius=0.1];   
     \draw[fill,blue] (1,1) circle [radius=0.1];   
          \draw[fill,green] (0,2) circle [radius=0.1];   
    \draw[fill] (-0.5,1.5) circle [radius=0.1];   
       \draw[fill] (0.5,1.5) circle [radius=0.1];  
       
       \node[above right] at (1,1){\bf{1}};
       \node[above right] at (0.5,1.5){\bf{1}};
       \node[below right] at (0.5,0.5){\bf{1}};

       \node[above left] at (-1,1){\bf{0}};
       \node[above left] at (-0.5,1.5){\bf{0}};
       \node[below left] at (-0.5,0.5){\bf{0}};
       
\end{tikzpicture}
\begin{tikzpicture}

  \node[] at (1.2,3.5) { \bf{k=4,q=1}};

     \draw[red,dashed]   (-0.5,0.5) -- (0.5,0.5);      
     \draw[red,dashed]   (-1.5,1.5) -- (1.5,1.5);      
     \draw[red,dashed]   (-0.5,2.5) -- (0.5,2.5);   
      \draw[ultra thick,dashed] (-1.5,1.5) to [out=45,in=135] (-0.5,1.5);
      \draw[ultra thick,dashed] (0.5,1.5) to [out=45,in=135] (1.5,1.5);

  \draw[ultra thick]   (-0.5,0.5) -- (-1,1);      
     \draw[ultra thick]   (0.5,0.5) -- (1,1);      
     \draw[ultra thick]   (-1,1) -- (-1.5,1.5);      
     \draw[ultra thick]   (-1,1) -- (-0.5,1.5);      
     \draw[ultra thick]   (1,1) -- (1.5,1.5);      
     \draw[ultra thick]   (1,1) -- (0.5,1.5);
          \draw[ultra thick]   (-1,2) -- (-1.5,1.5);      
     \draw[ultra thick]   (-1,2) -- (-0.5,1.5);      
     \draw[ultra thick]   (1,2) -- (1.5,1.5);      
     \draw[ultra thick]   (1,2) -- (0.5,1.5);
       \draw[ultra thick]   (-1,2) -- (-0.5,2.5);      
     \draw[ultra thick]   (1,2) -- (0.5,2.5);

   \draw[fill,green] (0,0) circle [radius=0.1];   
    \draw[fill] (-0.5,0.5) circle [radius=0.1];   
       \draw[fill] (0.5,0.5) circle [radius=0.1];   
     \draw[fill,green] (0,1) circle [radius=0.1];   
          \draw[fill] (-1,1) circle [radius=0.1];   
     \draw[fill] (1,1) circle [radius=0.1];   
        \draw[fill,blue] (-1.5,1.5) circle [radius=0.1];   
          \draw[fill,blue] (-0.5,1.5) circle [radius=0.1];   
     \draw[fill,blue] (0.5,1.5) circle [radius=0.1];  
          \draw[fill,blue] (1.5,1.5) circle [radius=0.1];  
      \draw[fill,green] (0,2) circle [radius=0.1];   
   \draw[fill,green] (0,3) circle [radius=0.1];   
          \draw[fill] (-1,2) circle [radius=0.1];   
     \draw[fill] (1,2) circle [radius=0.1];   
        \draw[fill] (-0.5,2.5) circle [radius=0.1];   
     \draw[fill] (0.5,2.5) circle [radius=0.1];

           \node[above right] at (1.5,1.5){\bf{1}};
       \node[above left] at (0.5,1.5){\bf{1}};
       \node[below right] at (0.5,0.5){\bf{1}};
       \node[below right] at (1,1){\bf{1}};
        \node[above right] at (1,2){\bf{1}};
        \node[above right] at (0.5,2.5){\bf{1}};

        \node[above left] at (-1.5,1.5){\bf{0}};
       \node[above right] at (-0.5,1.5){\bf{0}};
       \node[below left] at (-0.5,0.5){\bf{0}};
       \node[below left] at (-1,1){\bf{0}};
        \node[above left] at (-1,2){\bf{0}};
        \node[above left] at (-0.5,2.5){\bf{0}};

\end{tikzpicture}

\caption{By inspecting formulae (\ref{weightexplicitdisplay1}), (\ref{weightexplicitdisplay2}) for the integrand we make the following observations (where without loss of generality we assume that $\beta>2$ since the integrand is uniformly bounded for $\beta \le 2$ from Lemma \ref{WeightBoundedness}). We have singularities, each of order $\left(1-\frac{2}{\beta}\right)$, when any of the coordinates of the centre row (where the two interlacing arrays are joined) are either $0$ or $1$, depicted as blue particles in the figure. Moreover, for any pair of coordinates on the same row which coalesce, depicted as dashed edges in the figure, we have a factor vanishing at order $2\left(1-\frac{2}{\beta}\right)$. Finally, for any pair of coordinates on two consecutive rows which coalesce, depicted as solid edges in the figure, we have a singularity of order $\left(1-\frac{2}{\beta}\right)$. Then, to compute the order of the singularity at a point we simply add up the number of blue particles to the number of solid edges and subtract twice the number of dashed edges and then finally multiply the result by $\left(1-\frac{2}{\beta}\right)$. Thus, the order of the singularity at the point $\mathfrak{x}(3;1)$ is $\left[2\cdot 1+4\cdot 1\right]  \left(1-\frac{2}{\beta}\right)= 6 \left(1-\frac{2}{\beta}\right)$ and similarly the order of the singularity at $\mathfrak{x}(4;1)$ is $\left[12\cdot 1+4\cdot 1-2\cdot 2\right]  \left(1-\frac{2}{\beta}\right)= 12 \left(1-\frac{2}{\beta}\right)$. }\label{Figure3}

\end{figure}

We now give some informal heuristics in support of $\mathcal{A}(k;q)=(0,2kq^2)$ and intuition behind the definition of $\mathfrak{x}(k;q)$. Observe that one of the main complications in studying the finiteness of (\ref{ConstIntegralRep}) compared to (\ref{Integralk=2}), in addition to the obvious difficulty of having to analyse an integral over the whole array instead of a single row, is that singularities can arise not only when the coordinates of a point are $0$'s and $1$'s. Nevertheless, $0$ and $1$ are distinguished points in that we have some extra singular factors there corresponding to the coordinates of the centre row, see for example Figure \ref{Figure3}. Intuitively then, in order to obtain as strict of a restriction on $\beta$ as possible, one would like to look at singularities at points having more $0$'s and $1$'s as coordinates and indeed checking what happens in a number of different cases indicates that it is actually best to only use $0$'s and $1$'s. However, the fact that one needs to take into account the sum constraints complicates things and proving this claim rigorously appears quite messy and we do not pursue it further in this paper\footnote{It is possible to prove the claim for the smallest possible values of $k,q$ by brute force computations of all the possible cases but also for $k=2$ and general $q$, in which case the desired final result $\mathcal{A}(2;q)=(0,4q^2)$ is of course already known from Lemma \ref{LemmaIntegrabilityk=2}.}. 

Assuming this unproven heuristic, it would then be possible to argue by direct computations that it is better (in that we get a stricter restriction for $\beta$) to look at a singularity at a point that involves all of the $(k-1)$ sum constraints. Due to the ordering of the coordinates on individual rows and the interlacing this uniquely identifies $\mathfrak{x}(k;q)$ as the point  with the minimal number of variables which achieves this. Finally, by some more calculations it is possible to prove that looking at a singularity at a point involving any additional coordinates to the ones already defining $\mathfrak{x}(k;q)$ will not give a stricter restriction on $\beta$, which would then imply that $\mathcal{A}(k;q)=(0,2kq^2)$.

To conclude, simply putting everything together establishes Proposition \ref{PropAsymptotics2}.

\begin{proof}[Proof of Proposition \ref{PropAsymptotics2}]
The statement follows by combining Corollary \ref{CorComparison} and Lemmas \ref{ExplicitExpressions}, \ref{LemmaIntegrabilityk=2} and \ref{LemmaSupremum} above.
\end{proof}

\subsection{On $\mathfrak{c}^{(\beta)}(2;q)$ and integrable systems}\label{IntSys}

We discuss in some more detail the integral expression of the leading order coefficient in the asymptotics for the special case $k=2$. By using Lemma \ref{ExplicitExpressions} and writing the sum constraint as a Fourier integral we have:
\begin{align*}
\mathfrak{c}^{(\beta)}(2;q)&=\prod_{M=1}^{2q}\frac{\Gamma\left(\frac{2}{\beta}\right)}{\Gamma\left(M\frac{2}{\beta}\right)^2}\int_{\mathsf{W}_+^{(2q)}\cap[0,1]^{2q},\  \sum_{i=1}^{2q}x_i=q}\prod_{1\le i<j \le 2q}(x_i-x_j)^{\frac{4}{\beta}}\prod_{i=1}^{2q}\left[x_i\left(1-x_i\right)\right]^{\frac{2}{\beta}-1}dx\\
&=\prod_{M=1}^{2q}\frac{\Gamma\left(\frac{2}{\beta}\right)}{\Gamma\left(M\frac{2}{\beta}\right)^2}\int_{-\infty}^{\infty}dse^{2\pi \i s q}\int_{\mathsf{W}_+^{(2q)}\cap[0,1]^{2q}}\prod_{1\le i<j \le 2q}(x_i-x_j)^{\frac{4}{\beta}}\prod_{i=1}^{2q}e^{-2\pi \i sx_i}\left[x_i\left(1-x_i\right)\right]^{\frac{2}{\beta}-1}dx.
\end{align*}

For $\beta=2$, by the Andreif identity, the inner $2q$-dimensional integral is a Hankel determinant corresponding to a certain special weight and is known to have a representation in terms of a particular case of the Painlev\'e V equation: 
\begin{align*}
\left(t\frac{d^2}{dt^2}\sigma_{2q}(t)\right)^2=\left(\sigma_{2q}(t)+\left(4q-t\right)\frac{d}{dt}\sigma_{2q}(t)\right)^2-4\left(\frac{d}{dt}\sigma_{2q}(t)\right)^2\left((2q)^2-\sigma_{2q}(t)+t\frac{d}{dt}\sigma_{2q}(t)\right),
\end{align*}
see \cite{BasorPainleveV}, \cite{BasorGeRubinstein}, \cite{AssiotisKeating} for the precise statement.

It is plausible that a connection to integrable systems exists for other values of $\beta$ as well, especially for the COE and CSE cases, namely for $\beta=1$ and $\beta=4$ (from Lemma \ref{LemmaIntegrabilityk=2} we need to restrict to $q>1$ for $\mathfrak{c}^{(4)}(2;q)$ to be finite), and the formula we give above could be used as a starting point for such an investigation (as the special case of this formula for $\beta=2$ was used in \cite{BasorGeRubinstein}). In particular, for $\beta=4$ we see that the inner integral can in fact be written as a Pfaffian by making use of de Bruijn's formula, see \cite{DeBruijn}. 

Finally, for $\beta=2$ and $k\ge 3$ the integral expression for $\mathfrak{c}^{(2)}(k;q)$ can be somewhat simplified since it is possible to compute the intermediate integrals between two consecutive sum constraints in terms of spline functions \cite{CurrySchoenberg}, \cite{OlshanskiProjections}, see Section 4 of \cite{AssiotisKeating} for more details. It is unclear whether an analogous simplification exists for general $\beta$.

\bigskip
\noindent
{\sc School of Mathematics, University of Edinburgh, James Clerk Maxwell Building, Peter Guthrie Tait Rd, Edinburgh EH9 3FD, U.K.}\newline
\href{mailto:theo.assiotis@ed.ac.uk}{\small theo.assiotis@ed.ac.uk}

\end{document}